\newcommand{\shrinkmargins}[1]{
  \addtolength{\textheight}{#1\topmargin}
  \addtolength{\textheight}{#1\topmargin}
  \addtolength{\textwidth}{#1\oddsidemargin}
  \addtolength{\textwidth}{#1\evensidemargin}
  \addtolength{\topmargin}{-#1\topmargin}
  \addtolength{\oddsidemargin}{-#1\oddsidemargin}
  \addtolength{\evensidemargin}{-#1\evensidemargin}
  }
\newtheorem{theorem}{Theorem}
\newtheorem{lemma}[theorem]{Lemma}
\newtheorem{corollary}[theorem]{Corollary}
\newtheorem*{theorem*}{Theorem}
\newtheorem{proposition}[theorem]{Proposition}
\theoremstyle{definition}
\newtheorem{definition}{Definition}
\newtheorem{example}{Example}
\theoremstyle{remark}
\newtheorem*{remark}{Remark}
\numberwithin{theorem}{section} \numberwithin{equation}{section}
\newcommand{\floor}[1]{\left\lfloor #1 \right\rfloor}
\begin{document}
\title[Infinite Products]{Log-Concavity and Log-Convexity of Restricted Infinite Products}
\author{Krystian Gajdzica}
\address{Theoretical Computer Science Department, Faculty of Mathematics and Computer Science, Jagiellonian University, Łojasiewicza 6, 30--348 Kraków, Poland}
\email{krystian.gajdzica@uj.edu.pl}
\author{Bernhard Heim}
\address{Department of Mathematics and Computer Science\\Division of Mathematics\\University of Cologne\\ Weyertal 86--90 \\ 50931 Cologne \\Germany}
\email{bheim@uni-koeln.de}
\author{Markus Neuhauser}
\address{Kutaisi International University, 5/7, Youth Avenue,  Kutaisi, 4600 Georgia}
\email{markus.neuhauser@kiu.edu.ge}
\address{Lehrstuhl f\"{u}r Geometrie und Analysis, RWTH Aachen University, 52056 Aachen, Germany}
\subjclass[2020] {Primary 05A17, 11P82; Secondary 05A20}
\keywords{Generating functions, log-concavity,
log-convexity, partition numbers, symmetric group.}
\begin{abstract}In this paper we provide a  classification 
on the sign distribution of $\Delta _{E,\ell}(n):=
p_{E,\ell }(n)^2 - p_{E,\ell }(n-1) \, p_{E,\ell }(n+1)$, where
\begin{equation*}
\sum_{n =0}^{\infty}
p_{E,\ell }(n) \, q^n := \prod_{n \in S}
 \left(1 - q^n \right)^{-f_{\ell}(n)},\quad (\ell \in \mathbb{N}, f_1\equiv 1).
\end{equation*}
We take the product over $1\in
S \subset \mathbb{N}$ and denote the complement by
$E$, the set of exceptions. In the case of $\ell=1$ and $E$
the multiples of $k$, $p_{E,1}\left( n\right) $ represents the number of $k$-regular partitions. More generally, let $f_{\ell}$ satisfy a certain growth condition. We determine the signs of $\Delta _{E,\ell }(n)$ for $\ell$ large. The signs
mainly depend on
the occurrence
of subsets of $\{2,3,4,5\}$ as a part of the exception set and the residue class of $n$ modulo $
r$, where $r
$ depends on $E$. For example, let $2,3 \in S$ and 
$4$ an exception. Let $n$ be large. Then
for almost all $\ell$ we have
\begin{equation*}
\Delta _{E,\ell }(n) >0 \,\,\, \text{ for } n\equiv 2 \pmod{3}.
\end{equation*}
If we assume $3,4 \in S$ and $2$ an  exception.
Let $n$ be large. 
Then
for almost all $\ell$ we have
\begin{equation*}
\Delta _{E,\ell }(n) < 0 \,\,\, \text{ for } n\equiv 2 \pmod{3}.
\end{equation*}
Note that this property is independent of the integers
$k\in S,k>4$.
\end{abstract}

\maketitle
\newpage

\section{Introduction and
main
results}

Meinardus \cite{Me54} demonstrated and proved that it is possible to
get precise results in studying the asymptotic behavior of the
$q$-expansion of general infinite products of Eulerian type.
He obtained asymptotic formulas of such partition numbers $p_f(n)$ to weight functions $f$
(we also refer to \cite{An98}, Chapter 6). 
Debruyne and Tenenbaum \cite{DT20}
extended Meinardus' results to partitions with restrictions on the parts
with support $\mathbb{N} \setminus
f^{-1}\left( \left\{ 0\right\} \right) $. Granovsky, Stark, and Erlihson
introduced a probabilistic method to extend Meinardus' result due to Khintchine
which applies to a variety of models in physics and combinatorics 
\cite{GSE08, GS06,GS12, GS15}.

Recently, Bridges, Brindle, Bringmann, and Franke \cite{BBBF24} proved asymptotic formulas if the support is a multiset of integers and the zeta function has multiple poles. 
This had been successfully applied \cite{BFH24} to certain weight functions $f_{\ell}(n)$ 
\begin{equation*}\label{weights}
\sum_{n=0}^{\infty} p_{\ell}(n) \, q^n = \prod_{n=1
}^{\infty} \left( 1 - q^n \right)^{-f_{\ell}(n)}
\end{equation*}
(e.~g.\
$n^{\ell}$ and
$\lambda_{\ell}$, where $\lambda_{\ell}(n)$ counts the number of subgroups of $\mathbb{Z}^{\ell}$ of index $n$).

In this paper we 
consider sequences of weight functions $\{f_{\ell}\}_{\ell}$
and invest in the log-concavity and log-convexity 
of the partition functions $p_{E,
{\ell }}\left( n\right) $ and its 
stability with respect to the configuration of the exceptional set $E$.
We restrict the infinite product to $n \in S \subset \mathbb{N}$ and
denote by
$E:= \mathbb{N} \setminus S $ the exception set.
We assume that $1 \in S$. 

To state our results, we introduce some notation.
\begin{definition}\label{main:def}
Throughout the paper, let $\{ f_{\ell}\}_{\ell}$ be a sequence of normalized arithmetic functions. We assume that $f_{\ell}: \mathbb{N} \longrightarrow \mathbb{N}$ and that
the initial function $f_1 \equiv 1$.
Further, let
real valued functions $\Phi
\left( {\ell}\right) , \Psi
\left( {\ell}\right)
$ on $\mathbb{N}$ exist such that
the following hold true.
\begin{enumerate}
\item  For all $\ell \in \mathbb{N}$ we have
$n^{\Phi(\ell)} \leq f_{\ell}(n) \leq 
n^{\Psi(\ell)}
$.

\item  There exists a positive number $B
$ independent of $\ell$ such that $ 0 \leq \Psi(\ell) - \Phi(\ell) \leq B
$.
\item  We have $\lim_{\ell \to \infty} \Phi(\ell) = \infty$.
\end{enumerate}

\end{definition}
Let $S$ be a subset of $\mathbb{N}$ with $1 \in S$. Then we
have the double sequence
$\{p_{E,\ell }(n)\}_{\ell,n}$ of partition numbers by
\begin{equation*}\label{exception}
\sum_{n=0}^{\infty} p_{E,\ell }(n) \, q^n = \prod_{n \in S}
\left( 1 - q^n \right)^{-f_{\ell}(n)}.
\end{equation*}

Moreover, we define
\begin{equation*}
\Delta _{E,\ell }(n):= \left(p_{E,\ell }(n) \right)^2 - 
p_{E,\ell }(n-1)\,  p_{E,\ell }(n+1) .
\end{equation*}
The double sequence
$\{p_{E,\ell }(n)\}_{\ell,n}$ is log-concave at $n$ for the parameter $\ell$ if $\Delta _{E,\ell }\left( n\right)
\geq 0$ and strictly log-concave if $\Delta _{E,\ell }(n) > 0$. 
We have the same definition for log-convexity if $\Delta _{E,\ell }(n) \leq 0$, and
strict log-convexity if $\Delta _{E,\ell }(n) < 0$. 
\newline

We begin by demonstrating the scope of our results on generalized 
$k$-regular partitions. Let $f_{\ell}(n):= n^{\ell -1}$ and 
$E_k:=\{k^m\, : \, m \in \mathbb{N} \}$
for $k>1$. Let $P$ denote the set of all partitions of $n$.
We denote by
$p_{E_k,\ell}(n)$ the generalized $k$-regular partitions, since the number of elements of $p_{E_k,1}(n)$ is given by
\begin{equation*}
\Big\vert \left\{ \lambda=(\lambda_1, \lambda_2, \ldots, \lambda_d )\in P \, : \, \lambda_j \neq k^m, \, d,m \in \mathbb{N} \text{ and } 1 \leq j \leq d \right\} \Big\vert.
\end{equation*}
Further, $p_{E_k,2}(n)$ is related to restricted plane partitions.
Note, that for $E=\emptyset$ (no exceptions) and $n\geq 6$ that $\Delta_{\emptyset,\ell}(n)$
is positive for almost all $\ell$ and $n\equiv 0 \pmod{3}$. Further,
$\Delta_{\emptyset,\ell}(n)$ is negative for almost all $\ell$ if $n \equiv 1,2 \pmod{3}$ (we refer to \cite{HN22}).

Let $n \geq 4$ and $k=2$. Then we prove that $\Delta_{E_2,\ell}$ is positive 
for almost all $\ell$ if $n$ is divisible by $3$ and negative for almost all $\ell$ otherwise. This follows from our Theorem \ref{th13} and Theorem \ref{th135}. 
Note, the same is also true if $1,3,5 \in S$ and $2,4 \in E$,
which shows already the dominance of $2$ and $4$. We also refer to Figure \ref{24}.
\begin{center}
\begin{figure}[!htb]\label{24}
   \begin{minipage}{1\textwidth}
     \centering
     \includegraphics[width=0.7\linewidth]{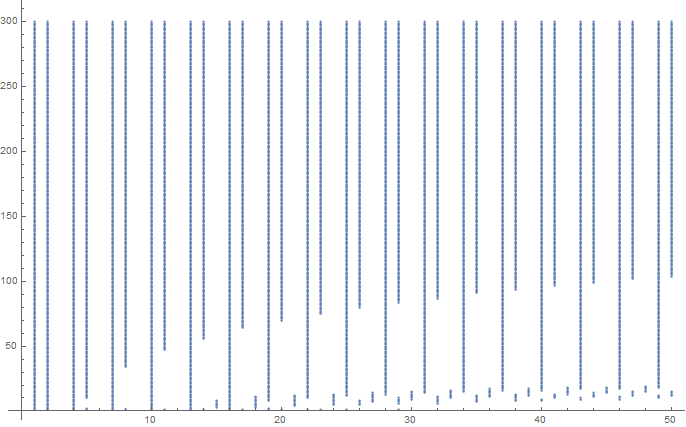}
     \caption{All the pairs $(n,\ell)$ such that $\Delta_{E,\ell}(n)\leq 0$, where
     $f_{\ell}(n)=n^{\ell -1}$, $E=\{2,4\}$, $1\leq\ell\leq300$ and $1\leq n\leq 50$. }
   \end{minipage}\hfill
\end{figure}
\end{center}

In contrast, let $1,2,3 \in S$ and $4 \in E$ (but not $2$). Then we have for every $n \geq 3$, that $\Delta_{E,\ell}$ is positive for almost all $\ell$ if $n \not\equiv 1 \pmod{3}$ and negative for almost all $\ell$ if $n \equiv 1 \pmod{3}$. 
This follows from Theorem \ref{th123} and Theorem \ref{th1234}.

Now we consider what is happening in the case of generalized $3$-regular partitions. Let $n\geq 5$. Then we have for $\ell$ large dependent on $n$:
\begin{eqnarray*}
\Delta _{E_3,\ell }(n) >
0 \text{ for }n\equiv 0 \pmod{2} \text{ and }
\Delta _{E_3,\ell }(n) <
0 \text{ for }n\equiv 1 \pmod{2}. 
\end{eqnarray*}
This follows from our Theorem \ref{th12}. Note, the same
is true if we only request that $1,2\in S$ and $3 \in E$.

Finally, let $k \geq 5$. Then again by Theorem \ref{th123} we obtain
that for almost all $n$ we have $\Delta_{E_k,\ell}(n) >0$ for $n \equiv 0 \pmod{3}$ and $\Delta_{E_k,\ell}(n) <0$ for $n \equiv 1 \pmod{3}$. 
The case $n\equiv 2 \pmod{3}$ remains mysterious (we refer to Example 1 and Example 2).

\section{Classification}
\subsection{The case $1,2,3 \in S $}
We need to distinguish between the cases related to $n \not\equiv 2 \pmod{3}$ and
$n \equiv 2 \pmod{3}$.

\begin{theorem}
\label{th123}Let $\{f_{\ell}\}_{\ell}$ be given as in Definition \ref{main:def}.
Then we have for all $n\geq3$ with $n\not\equiv 2 \pmod{3}$ the following log-concavity and log-convexity properties. Let $1,2,3$
not be
exceptions. Then we have for $\ell$ large dependent on $n$:
\begin{eqnarray*}
\Delta _{E,\ell }\left( n\right) >
0 \text{ for }n
\equiv 0 \pmod{3} \text{ and }\Delta _{E,\ell}\left( n\right)
<0 \text{ for }n\equiv 1
\pmod{3}. 
\end{eqnarray*}
\end{theorem}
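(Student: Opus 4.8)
The plan is to reduce $p_{E,\ell}(n)$ to a single dominant monomial in the weights as $\ell\to\infty$, and then read off the sign of $\Delta_{E,\ell}(n)$ from a comparison of products of parts. Write $a_j:=f_\ell(j)$ for $j\in S$, and note that condition (1) of Definition \ref{main:def} with $n=1$ forces $a_1=1$. Expanding each factor as $(1-q^j)^{-a_j}=\sum_{m\geq0}\binom{a_j+m-1}{m}q^{jm}$ gives
\begin{equation*}
p_{E,\ell}(n)=\sum_{\mu}\ \prod_{j\in S}\binom{a_j+m_j(\mu)-1}{m_j(\mu)},
\end{equation*}
the sum being over partitions $\mu$ of $n$ into parts from $S$, where $m_j(\mu)$ is the multiplicity of $j$. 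Since $\binom{a_j+m-1}{m}=\frac{a_j^{m}}{m!}+O(a_j^{m-1})$, the contribution of $\mu$ has leading monomial $c_\mu\prod_j a_j^{m_j(\mu)}$ with $c_\mu=\prod_j (m_j(\mu)!)^{-1}$, and I would attach to $\mu$ the \emph{product of its parts} $P_\mu:=\prod_j j^{m_j(\mu)}$.

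The first step is to show that $p_{E,\ell}(n)$ is, for $\ell$ large, governed by the partitions $\mu$ maximizing $P_\mu$. Writing $\log a_j=\Phi(\ell)\log j+\varepsilon_j$ with $0\leq\varepsilon_j\leq B\log j$ by conditions (1)--(2), the logarithm of the monomial of $\mu$ equals $\Phi(\ell)\log P_\mu + R_\mu$ with $|R_\mu|\leq B\log P_\mu$ bounded for fixed $n$. Hence if $P_{\mu^*}>P_\nu$ then the ratio of the two monomials is at least $\exp\!\big(\Phi(\ell)\log(P_{\mu^*}/P_\nu)-O_n(1)\big)\to\infty$ by condition (3), so the largest-product partition dominates. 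The elementary ``maximize the product with fixed sum'' computation then shows the maximizers use only the parts $2$ and $3$ (a part $\geq5$ is always improved by splitting, while a $4$ merely ties with $2+2$), and, since $1,2,3\in S$, these maximizers are always admissible.

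The second step is to identify the maximizers for $n,n-1,n+1$ and compute $\Delta_{E,\ell}(n)$ to leading order. For $n\equiv0\pmod3$ the unique maximizer of $n$ is $3^{n/3}$, while $n-1\equiv2$ and $n+1\equiv1$ force one extra $2$ and two extra $2$'s (or one $4$), respectively; a direct calculation gives
\begin{equation*}
\frac{p_{E,\ell}(n-1)\,p_{E,\ell}(n+1)}{p_{E,\ell}(n)^2}\ \sim\ \frac{(n/3)^2}{2}\cdot\frac{a_2^3}{a_3^2}\ \longrightarrow\ 0,
\end{equation*}
so $\Delta_{E,\ell}(n)>0$. For $n\equiv1\pmod3$ the maximizer of $n$ uses two $2$'s (the only place a stray $4\in S$ can tie), while $n-1\equiv0$ and $n+1\equiv2$ are pure $3$'s and one $2$ plus $3$'s; here
\begin{equation*}
\frac{p_{E,\ell}(n-1)\,p_{E,\ell}(n+1)}{p_{E,\ell}(n)^2}\ \sim\ \frac{4}{((n-1)/3)^2}\cdot\frac{a_3^2}{a_2^3}\ \longrightarrow\ \infty,
\end{equation*}
so $\Delta_{E,\ell}(n)<0$. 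Both cases pivot on the single inequality $a_3^2/a_2^3\to\infty$, which holds because $a_3^2/a_2^3\geq 3^{2\Phi(\ell)}/2^{3\Psi(\ell)}\geq(9/8)^{\Phi(\ell)}2^{-3B}\to\infty$: the product $9=3^2$ beats $8=2^3$, and the bounded spread $B$ is absorbed by $\Phi(\ell)\to\infty$.

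The step I expect to be the main obstacle is making this domination rigorous: one must control the lower-order terms $O(a_j^{m-1})$ and the remainders $R_\mu$ uniformly in $\ell$, and must handle the ties produced when $4\in S$, where a $4$ and a $2+2$ have equal product. Here the saving grace is that in each residue class the \emph{larger} of $p_{E,\ell}(n)^2$ and $p_{E,\ell}(n-1)\,p_{E,\ell}(n+1)$ is built from partitions with a unique product-maximizer: $3^{n/3}$ when $n\equiv0$, and the all-$3$'s partition of $n-1$ together with the one-$2$-plus-$3$'s partition of $n+1$ when $n\equiv1$. Every tie therefore lands on the \emph{smaller} side and cannot reverse the sign, and the resulting threshold on $\ell$ depends on $n$ only through the constants $(n/3)^2/2$ and $4/((n-1)/3)^2$, matching the phrase ``$\ell$ large dependent on $n$.''
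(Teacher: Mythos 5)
Your proof is correct, and its mathematical core is the same as the paper's: the sign of $\Delta_{E,\ell}(n)$ is decided by comparing maximal products of parts ($3^2=9$ against $2^3=8$), with the bounded spread $\Psi(\ell)-\Phi(\ell)\leq B$ absorbed by $\Phi(\ell)\to\infty$. The implementation, however, is genuinely different. The paper never expands the product binomially; it passes through the exponential formula with $g_{E,\ell}(n)=\sum_{d\mid n,\,d\notin E}d\,f_{\ell}(d)$, sandwiches the ratio $p_{E,\ell}(n)^2/\bigl(p_{E,\ell}(n-1)p_{E,\ell}(n+1)\bigr)$ between constants times powers of $Q_E(n)=M_E(n)^2/\bigl(M_E(n-1)M_E(n+1)\bigr)$ (Proposition~\ref{prop: estimations}), and proves one abstract criterion (Theorem~\ref{thm: basic}): $Q_E(n)>1$ forces eventual strict log-concavity and $Q_E(n)<1$ eventual strict log-convexity. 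Theorem~\ref{th123} then follows from the single computation $Q_E(n)=9/8$ resp.\ $8/9$ in Section~\ref{subsection S=(1,2,3)}, with the maximizers supplied by Lemma~\ref{Lemma: a_2=2}. Your sum over $S$-partitions of products of binomial coefficients replaces the paper's sum over compositions weighted by $g_{E,\ell}$; it is arguably more elementary and yields explicit leading constants the paper never needs. What the paper's route buys is robustness and reusability: since only the maximal value $M_E$ enters the criterion, it is insensitive to how many partitions attain the maximum, so the $4\in S$ tie that you must deflect by hand (``ties land on the smaller side'') simply never arises; and the same machinery serves all the other exception sets, including the hard case $Q_E(n)=1$ (Theorem~\ref{thm: involved}), where multiplicities of maximizers do matter. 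Two small imprecisions in your write-up, neither fatal: the asymptotic $\binom{a_j+m-1}{m}=a_j^m/m!+O\bigl(a_j^{m-1}\bigr)$ fails for $j=1$, where $a_1=1$ stays bounded and the binomial equals $1$ rather than $\asymp 1/m_1!$ --- harmless, because partitions containing $1$'s have strictly smaller part-product and are dominated; and your two displayed ``$\sim$'' formulas are literally false when $4\in S$ (the contribution of $a_4$ is omitted), so they should be stated as two-sided bounds up to factors bounded in terms of $B$ and $n$, which is exactly the repair your final paragraph sketches.
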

The proof of this Theorem is given in Section \ref{subsection S=(1,2,3)}.
\begin{theorem}
\label{th1234}Let $\{f_{\ell}\}_{\ell}$ be given as in Definition \ref{main:def}.
Then we have for all $n\geq3$ with $n \equiv 2 \pmod{3}$ the following log-concavity and log-convexity properties. Let $1,2,3$
not be
exceptions. Then we have for $\ell$ large dependent on $n$:
Let $4 \in E$. Then $\Delta _{E,\ell }\left( n\right)
> 0$ for almost all $\ell$.
\end{theorem}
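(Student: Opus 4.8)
The plan is to treat $p_{E,\ell}(n)$ as a polynomial in the quantities $f_\ell(m)$, $m\in S$, and to pin down the dominant monomial of $\Delta_{E,\ell}(n)$ as $\ell\to\infty$. First I would expand each Euler factor as $(1-q^m)^{-f_\ell(m)}=\sum_{a\ge 0}\binom{f_\ell(m)+a-1}{a}q^{ma}$ and use that $f_\ell(1)=1$ (forced by $1^{\Phi(\ell)}\le f_\ell(1)\le 1^{\Psi(\ell)}$), so that every part equal to $1$ contributes the factor $1$. This yields
\begin{equation*}
p_{E,\ell}(n)=\sum_{\substack{(a_m)_{m\in S,\,m\ge 2}\\ \sum_{m\ge 2} m a_m\le n}}\ \prod_{m\in S,\,m\ge 2}\binom{f_\ell(m)+a_m-1}{a_m},
\end{equation*}
a polynomial with positive rational coefficients in the $f_\ell(m)$. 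I attach to each monomial $\prod_m f_\ell(m)^{e_m}$ its \emph{value} $v:=\prod_m m^{e_m}$, and note that the monomial occurs only if its hosting sum $\sum_m m e_m$ is at most $n$.

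Next I would establish a domination principle. If two admissible monomials have values $v>v'$, then $m^{\Phi(\ell)}\le f_\ell(m)\le m^{\Psi(\ell)}$ and $\Psi(\ell)-\Phi(\ell)\le B$ give that the first is at least $v^{\Phi(\ell)}$ and the second at most $(v')^{\Phi(\ell)+B}$. For fixed $n$ there are only finitely many values $v,v'$, so $\log v-\log v'$ is bounded below by some $\delta>0$ while $\log v'$ is bounded above; since $\Phi(\ell)\to\infty$, the higher‑value monomial dominates once $\ell$ is large. Hence, for $\ell$ large, the sign of $\Delta_{E,\ell}(n)$ is decided by its monomials of maximal value, and it suffices to identify these and show their total coefficient is positive.

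The structural heart is the maximal‑value monomial. Writing $n=3q+2$ with $q\ge 1$, the product of parts (at fixed sum, parts $\ge 2$) is maximized by using threes, and one checks the unique maximizers: $f_\ell(2)f_\ell(3)^q$ in $p_{E,\ell}(n)$ (value $2\cdot 3^q$), $f_\ell(3)^{q+1}$ in $p_{E,\ell}(n+1)$ (value $3^{q+1}$), and — this is exactly where $4\in E$ enters — $f_\ell(2)^2 f_\ell(3)^{q-1}$ in $p_{E,\ell}(n-1)$ (value $4\cdot 3^{q-1}$), the competitor $f_\ell(3)^{q-1}f_\ell(4)$ being unavailable since $4$ is an exception. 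Multiplying out, both $p_{E,\ell}(n)^2$ and $p_{E,\ell}(n-1)\,p_{E,\ell}(n+1)$ have the \emph{same} maximal‑value monomial $f_\ell(2)^2 f_\ell(3)^{2q}$ of value $4\cdot 3^{2q}$, so the top values cancel and only the coefficients decide the sign. A short count gives coefficient $1/(q!)^2$ in the first product and $1/\bigl(2(q-1)!\,(q+1)!\bigr)$ in the second, whence the coefficient of $f_\ell(2)^2 f_\ell(3)^{2q}$ in $\Delta_{E,\ell}(n)$ is
\begin{equation*}
\frac{1}{(q!)^2}-\frac{1}{2(q-1)!\,(q+1)!}=\frac{q+2}{2(q+1)(q!)^2}>0 .
\end{equation*}

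The hard part will be that the naive leading terms do not separate $\Delta_{E,\ell}(n)$ by value; they cancel, so the argument must descend to coefficients. Making this rigorous requires two things. First, I must verify that $f_\ell(2)^2 f_\ell(3)^{2q}$ is the \emph{unique} monomial of value $4\cdot 3^{2q}$ occurring in either product: here the tight total budget $2n=6q+4=(n-1)+(n+1)$ forces all hosting sums to be extremal, ruling out any $3$‑smooth alternative such as one involving $f_\ell(6)$ (or any part $\ge 5$, or the forbidden $4$), since these cost strictly more total sum. Second, I must collect \emph{all} contributions to this monomial, including products of non‑leading binomial terms, and confirm they leave the coefficients above unchanged; the extremal‑sum constraint again pins each factor to a single partition, so the counts are exact rather than merely asymptotic. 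With uniqueness and the positive coefficient in hand, the domination principle gives $\Delta_{E,\ell}(n)>0$ for all sufficiently large $\ell$, i.e.\ for almost all $\ell$.
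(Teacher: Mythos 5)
Your proposal is correct, and it reaches the same key numbers as the paper by a technically different route. The paper works with the exponential-formula expansion $p_{E,\ell}(n)=\sum_{k}\frac{1}{k!}\sum_{m_1+\cdots+m_k=n}\prod_i g_{E,\ell}(m_i)/m_i$, where $g_{E,\ell}(m)=\sum_{d\mid m,\,d\notin E}df_{\ell}(d)$, and routes the problematic case $Q_E(n)=1$ through a general criterion (Theorem 5.1): assuming uniqueness of the maximizing $S$-partitions, the inequality $M_E(n)>M_E(n-1)$, and an asymptotic matching condition on the $g$-products, the sign of $\Delta_{E,\ell}(n)$ is decided by $A_E(n)^2/\bigl(A_E(n-1)A_E(n+1)\bigr)$, which in this case equals $\frac{2(n+1)}{n-2}>1$. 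You instead expand the product directly into binomial series, view $p_{E,\ell}(n)$ as a polynomial in the variables $f_\ell(m)$, and prove a self-contained domination principle for monomials ordered by value $\prod_m m^{e_m}$; the decisive computation is the net coefficient of the unique top monomial $f_\ell(2)^2f_\ell(3)^{2q}$, namely $\frac{1}{(q!)^2}-\frac{1}{2(q-1)!(q+1)!}=\frac{q+2}{2(q+1)(q!)^2}>0$. Writing $n=3q+2$, your ratio $\frac{1/(q!)^2}{1/(2(q-1)!(q+1)!)}=\frac{2(q+1)}{q}$ is exactly the paper's $\frac{2(n+1)}{n-2}$, so the combinatorial heart (threes maximize the product; $4\in E$ removes the competing partition $(4,3^{q-1})$ of $n-1$; compare factorial coefficients) is identical. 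What your version buys: it is more elementary and exact (no asymptotic matching hypothesis, no need for $M_E(n)>M_E(n-1)$, no error terms $\Tilde{M}_E$), since the leading monomials of $p(n)^2$ and $p(n-1)p(n+1)$ literally coincide and cancel inside the polynomial ring, and your budget argument $2a+3b=2n$ pinning both factors to extremal hosting sums is a clean substitute for the paper's uniqueness bookkeeping. What the paper's version buys: Theorem 5.1 is a reusable tool (it also handles the case $1,r,r+1\in S$ and, with the $A^\star$ modification, the case $1,2,3,4\notin E$ where the maximizing partition of $n-1$ is not unique), whereas your argument is tailored to this configuration. The only under-justified step in your sketch is the identification of the maximizing partitions (the paper's Lemma 6.3), which you assert with ``one checks''; that lemma, or an equivalent verification that no $3$-smooth monomial of equal value fits the sum budget, should be included for completeness — though your later budget argument essentially supplies it for the uniqueness part.
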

The proof of this Theorem is given in Section 
\ref{Paragraph: 1,2,3 not in E 4 in E}.
\begin{example}
Let $E=\emptyset $,
$f_{\ell }\left( n\right) =n^{\ell -1}$ for $n\neq 2$ and
$f_{\ell }\left( 2\right) =2^{\ell }$. Then for
$\Phi \left( \ell \right) =\ell -1$ and
$\Psi \left( \ell \right) =\ell $ obviously
$n^{\Phi \left( \ell \right) }\leq f_{\ell }\left( n\right) \leq
n^{\Psi \left( \ell \right) }$.
Then $g_{\ell }\left( 2\right) =1+2^{\ell +1}$,
$g_{\ell }\left( 3\right) =1+3^{\ell }$, and
$g_{\ell }\left( 4\right) =1+2^{\ell +1}+4^{\ell }$.

For $n>1$ fixed, we obtain the following asymptotic expansions in $\ell $
\[
p_{E,\ell }\left( n\right) \sim \left\{
\begin{array}{ll}
\frac{1}{\left( \frac{n}{3}\right) !}3^{\frac{n}{3}\left( \ell -1\right) }
,&n\equiv 0\pmod{3},\\
\frac{3}{\left( \frac{n-4}{3}\right) !}
\left( 4\cdot 3^{\frac{n-4}{3}}\right) ^{\ell -1}
,&n\equiv 1\pmod{3},\\
\frac{2
}{\left( \frac{n-2}{3}\right) !}
\left( 2\cdot 3^{\frac{n-2}{3}}\right) ^{\ell -1}
,&n\equiv 2\pmod{3}.
\end{array}
\right.
\]
Therefore, for $n\equiv 2\pmod{3}$ and $n>2$, we obtain
\[
\Delta _{E,\ell }\left( n\right)
\sim \frac{n+10
}{3\left( \frac{n-2}{3}\right) !\left( \frac{n+1}{3}\right) !}
\left( 2\cdot 3^{\frac{n-2}{3}}\right) ^{2\ell -2}
>0
\]
for sufficiently large $\ell $.
\end{example}

\begin{example}
Let $E=\emptyset $. Let $f_{\ell }\left( n\right) =n^{\ell }$ for
$n\notin \left\{ 2,4\right\} $,
$f_{\ell }\left( 2\right) =2^{\ell +\left( -1\right) ^{\ell }}$, and
$f_{\ell }\left( 4\right) =4^{\ell -\left( -1\right) ^{\ell }}$.
Obviously $n^{\ell -1}\leq f_{\ell }\left( n\right) \leq n^{\ell +1}$
so we can choose $\Phi \left( \ell \right) =\ell -1$ and
$\Psi \left( \ell \right) =\ell +1$. Then we obtain
$g_{\ell }\left( 2\right)
\sim 2^{1+\ell +\left( -1\right) ^{\ell }}$,
$g_{\ell }\left( 3\right)
\sim 3^{1+\ell }$, and
$g_{\ell }\left( 4\right)
\sim 4^{1+\ell -\left( -1\right) ^{\ell }}$.
Furthermore,
\[
p_{\ell }\left( n\right) \sim \beta _{n}\left( \ell \right) =\left\{
\begin{array}{ll}
\frac{1}{\left( \frac{n}{3}\right) !}3^{\ell \frac{n}{3}},&n\equiv 0\pmod{3},\\
\frac{1}{\left( \frac{n-4}{3}\right) !}
\left( \frac{1}{2}4^{\ell +\left( -1\right) ^{\ell }}
+4^{\ell -\left( -1\right) ^{\ell }}\right) 3^{\ell \frac{n-4}{3}},
&n\equiv 1\pmod{3},\\
\frac{1}{\left( \frac{n-2}{3}\right) !}
2^{\ell +\left( -1\right) ^{\ell }}3^{\ell \frac{n-2}{3}},&n\equiv 2\pmod{3}.
\end{array}
\right.
\]
For $n\equiv 2\pmod{3}$, $n>2$ we obtain
\begin{eqnarray*}
&&\left( \beta _{n}\left( \ell \right) \right) ^{2}
-\beta _{n-1}\left( \ell \right) \beta _{n+1}\left( \ell \right) \\
&=&\left( \frac{1}{\left( \frac{n-2}{3}\right) !}
2^{\ell +\left( -1\right) ^{\ell }}3^{\ell \frac{n-2}{3}}\right) ^{2}
-\frac{1}{\left( \frac{n+1}{3}\right) !\left( \frac{n-5}{3}\right) !}\left(
\frac{1}{2}4^{\left( -1\right) ^{\ell }}+4^{-\left( -1\right) ^{\ell }}
\right) 4^{\ell }3^{\ell \frac{2n-4}{3}}\\
&=&\frac{1}{\left( \frac{n+1}{3}\right) !\left( \frac{n-2}{3}\right) !}
\left( \frac{n+4}{6}4^{\left( -1\right) ^{\ell }}
-\frac{n-2}{3}4^{-\left( -1\right) ^{\ell }}\right)
4^{\ell }3^{\ell \frac{2n-4}{3}}.
\end{eqnarray*}
Therefore,
\[
\Delta _{E,\ell }\left( n\right)
\sim \frac{1}{\left( \frac{n+1}{3}\right) !\left( \frac{n-2}{3}\right) !}
4^{\ell }3^{\ell \frac{2n-4}{3}}\left\{
\begin{array}{ll}
\frac{7n+34}{12}>0, & \ell \equiv 0\pmod{2}, \\
\frac{-31n+68}{24}<0, & \ell \equiv 1\pmod{2}. 
\end{array}
\right.
\]
\end{example}

\subsection{The case $1,2 \in S $ and $3 \in E $}
This case is very interesting, since the assumption $2 \in S$ and $3 \in E$ already
determines the sign of $\Delta _{E,\ell }(n)$ for large $\ell$ dependent on only $n \pmod{2}$ for $n \geq 5$. The proof of the 
following Theorem \ref{th12} is provided in Section \ref{subsection42}
and Section \ref{subsection43}.
\begin{theorem}
\label{th12}Let $\{f_{\ell}\}_{\ell}$ be given as in Definition \ref{main:def}.
Then we have for all $n\geq 5$ the following log-concavity and log-convexity properties. Let
$3 \in E$ be an exception and let $1,2 \in S$
not be exceptions. Then we have for $\ell$ large dependent on $n$:
\begin{eqnarray*}
\Delta _{E,\ell }(n) >
0 \text{ for }n\equiv 0 \pmod{2} \text{ and }
\Delta _{E,\ell }(n) <
0 \text{ for }n\equiv 1 \pmod{2}. 
\end{eqnarray*}
\end{theorem}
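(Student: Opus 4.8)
The plan is to determine $p_{E,\ell}(n)$ asymptotically as $\ell\to\infty$ for fixed $n$, and to read off the sign of $\Delta_{E,\ell}(n)$ from its leading terms. I would start from
\[
\sum_{n\geq 0}p_{E,\ell}(n)\,q^n=\prod_{m\in S}(1-q^m)^{-f_\ell(m)}=\prod_{m\in S}\sum_{k\geq 0}\binom{f_\ell(m)+k-1}{k}q^{mk},
\]
so that $p_{E,\ell}(n)=\sum\prod_{m\in S}\binom{f_\ell(m)+k_m-1}{k_m}$, summed over all partitions $\sum_{m\in S}m\,k_m=n$ into parts from $S$. Since condition (3) of Definition \ref{main:def} forces $f_\ell(m)\to\infty$ for every $m>1$, while $f_\ell(1)=1$ for all $\ell$ (being squeezed between $1^{\Phi(\ell)}$ and $1^{\Psi(\ell)}$), I would replace each binomial by $\binom{f_\ell(m)+k_m-1}{k_m}=\tfrac{f_\ell(m)^{k_m}}{k_m!}\bigl(1+O(1/f_\ell(m))\bigr)$ and use the bounds $n^{\Phi(\ell)}\leq f_\ell(m)\leq n^{\Psi(\ell)}$ with $\Psi-\Phi\leq B$ to show that any partition whose product of parts $\prod_m m^{k_m}$ is not maximal contributes a vanishing proportion. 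This reduces $p_{E,\ell}(n)$ to the sum of $\prod_m f_\ell(m)^{k_m}/k_m!$ over the product-maximizing partitions.

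The next step is to find those partitions under the hypothesis $1,2\in S$, $3\in E$. The per-unit gain $\tfrac{\log m}{m}$ is largest, and equal, for $m\in\{2,4\}$, the value $m=3$ is forbidden, and every part $\geq 6$ (and every $2^j$ with $j\geq 3$) is beaten by a run of $2$'s. Hence for even $n=2N$ the maximizers use only $2$'s and $4$'s, with common product $2^N$, and
\[
p_{E,\ell}(2N)\sim A_N,\qquad A_M:=\sum_{b\geq 0}\frac{u^{\,M-2b}\,v^{\,b}}{(M-2b)!\,b!},\quad u:=f_\ell(2),\ v:=f_\ell(4).
\]
For odd $n$ exactly one odd part is forced; comparing $m\,2^{-m/2}$ over odd $m\neq 3$ shows the best odd slot is a $5$ when $5\in S$ and a $1$ otherwise. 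Thus $p_{E,\ell}(2N+1)\sim A_N$ if $5\notin S$, and $p_{E,\ell}(2N+1)\sim w\,A_{N-2}$ with $w:=f_\ell(5)$ if $5\in S$; the constraint $n\geq 5$ is exactly what makes the relevant $A_{N-2},A_{N-3}$ legitimate.

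I would then substitute into $\Delta_{E,\ell}(n)=p_{E,\ell}(n)^2-p_{E,\ell}(n-1)p_{E,\ell}(n+1)$. Writing $A_M=u^M\alpha_M(c)$ with $c:=v/u^2$ and $\alpha_M(c):=\sum_b c^b/((M-2b)!\,b!)>0$, Definition \ref{main:def} keeps $c\in[4^{-B},4^{B}]$ and $w/u^{\log_2 5}\in[5^{-B},5^{B}]$ bounded while $u\to\infty$, so the sign is decided by the power of $u$. If $5\notin S$, then for even $n=2N$ one finds $\Delta\sim A_N^2-A_{N-1}A_N\sim A_N^2>0$, and for odd $n=2N+1$ one finds $\Delta\sim A_N^2-A_NA_{N+1}\sim-A_NA_{N+1}<0$, because consecutive $A_M$ differ by a factor $u\to\infty$. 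If $5\in S$ the competition is instead between $A_N^2$ and $w^2A_{N-3}A_{N-2}$ (even $n$) and between $w^2A_{N-2}^2$ and $A_NA_{N+1}$ (odd $n$); in both, the decisive comparison of $u$-exponents reduces to $5^2<2^5$, again yielding $\Delta>0$ for even $n$ and $\Delta<0$ for odd $n$. This single numerical inequality is what makes the sign independent of whether $5\in S$ and of all larger parts.

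The main obstacle is making the reduction to leading order uniform. Because $\Psi-\Phi$ is only bounded, not tending to $0$, the ratios $f_\ell(4)/f_\ell(2)^2$ and $f_\ell(5)/f_\ell(2)^{\log_2 5}$ need not converge, so I cannot extract a single limiting constant; instead I must show each $A_M$ has exact order $u^M$ with coefficient bounded away from $0$ and $\infty$, and that every competing term in $\Delta_{E,\ell}(n)$ genuinely sits at a strictly smaller power of $u$. Establishing this strict separation — together with the uniform suppression of all non-optimal partitions and of the binomial error terms for $\ell$ large depending on $n$ — is where the real work lies; once it is in place, the two subcases collapse to $5^2<2^5$ and the claimed signs follow.
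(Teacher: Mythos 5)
Your proposal is correct, and the combinatorial core coincides with the paper's (the product-maximizing partitions consist of $2$s and $4$s for even $n$, with one $5$ or one $1$ inserted for odd $n$, and the decisive inequality is $2^5>5^2$); but the analytic route is genuinely different. The paper never derives asymptotics for $p_{E,\ell}(n)$ here: it works from the exponential expansion $\exp\bigl(\sum_n g_{E,\ell}(n)q^n/n\bigr)$ to get crude two-sided sandwich bounds (Proposition \ref{prop: estimations}), from which Theorem \ref{thm: basic} reads off the sign of $\Delta_{E,\ell}(n)$ solely from whether $Q_E(n)=M_E(n)^2/\bigl(M_E(n-1)M_E(n+1)\bigr)$ exceeds $1$; then Sections \ref{subsection42} and \ref{subsection43} compute, via Lemma \ref{Lemma: a_2=2}, that $Q_E(n)\in\{2^5/5^2,\,2\}$ for even $n$ and $Q_E(n)\in\{5^2/2^5,\,1/2\}$ for odd $n$, so the crude criterion already finishes the proof. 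Your route instead expands the product into binomial coefficients and establishes genuine asymptotics $p_{E,\ell}(n)\sim A_N$ (resp.\ $wA_{N-2}$), which is closer in spirit to the paper's Theorem \ref{thm: involved} --- machinery the paper only deploys for the delicate cases where $Q_E(n)=1$, which never occur in Theorem \ref{th12}. What your approach buys is sharper information (the actual leading term of $\Delta_{E,\ell}(n)$, not just its sign); what it costs is exactly the uniformity work you flag at the end --- but that work is routine: since $\Psi(\ell)-\Phi(\ell)\leq B$, every competing term is suppressed by a factor such as $w^2/u^5\leq 5^{2B}(25/32)^{\Phi(\ell)}\to 0$ or $A_{N-1}/A_N\leq C(n,B)/u\to 0$, which is precisely the mechanism the paper's sandwich bound packages once and for all. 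One small point to patch: your $A_M$ presumes $4\in S$; the theorem allows $4\in E$, in which case one sets $v=0$ (so $c=0$ falls outside your stated interval $[4^{-B},4^B]$, though all bounds still hold since $\alpha_M(0)=1/M!>0$), and the argument goes through unchanged --- the paper absorbs this case silently because $2$s and $4$s yield the same product $2^{n/2}$.
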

\subsection{The case $1,3 \in S $ and $2,4 \in E$}
We need to distinguish between the cases related to $n \not\equiv 1 \pmod{3}$ and
$n \equiv 1 \pmod{3}$.

\begin{theorem}
\label{th13}Let $\{f_{\ell}\}_{\ell}$ be given as in Definition \ref{main:def}.
Then we have for all $n\geq 4$ with $n\not\equiv 1 \pmod{3}$ the following log-concavity and log-convexity properties. Let $1,3 \in S$ and $2,4 \in E$. Then we have for $\ell$ large dependent on $n$:
\begin{eqnarray*}
\Delta _{E,\ell }(n) >
0 \text{ for }n\equiv 0 \pmod{3} \text{ and }
\Delta_{E,\ell}(n) <
0 \text{ for }n\equiv 2 \pmod{3}. 
\end{eqnarray*}
\end{theorem}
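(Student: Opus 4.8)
The plan is to fix $n$ and analyse the asymptotic behaviour of $p_{E,\ell}(m)$ as $\ell\to\infty$ for $m\in\{n-1,n,n+1\}$, and then read off the sign of $\Delta_{E,\ell}(n)$ from a comparison of leading terms. Expanding each factor of the defining product as $(1-q^{m'})^{-f_\ell(m')}=\sum_{j\ge 0}\binom{f_\ell(m')+j-1}{j}q^{m'j}$ gives, for fixed $m$, the finite sum
\[
p_{E,\ell}(m)=\sum_{\substack{(j_{m'})_{m'\in S}\\ \sum_{m'}m'j_{m'}=m}}\ \prod_{m'\in S}\binom{f_\ell(m')+j_{m'}-1}{j_{m'}}
\]
over partitions of $m$ into parts from $S$. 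Since $f_\ell(1)=1$ is forced by Definition \ref{main:def}(1), parts equal to $1$ contribute the factor $1$ and are ``free'', while for $m'\ge 2$ and bounded $j_{m'}$ one has $\binom{f_\ell(m')+j_{m'}-1}{j_{m'}}\sim f_\ell(m')^{j_{m'}}/j_{m'}!$. Writing $w:=f_\ell(3)$, the first step is to show that the dominant contribution comes from the partition maximising $\sum_{m'}j_{m'}\log m'$, and that this maximiser uses as many $3$'s as the residue $m\bmod 3$ allows. This rests on the elementary fact that $3$ maximises $(\log m')/m'$ over $m'\ge 2$, together with conditions (2) and (3) of Definition \ref{main:def}: every competing partition loses by an inequality of the shape $(a/b)^{\Phi}>c^{B}$ with $a>b$ (here $\Phi,\Psi$ abbreviate $\Phi(\ell),\Psi(\ell)$), which holds for large $\ell$ because $\Phi\to\infty$ while $B$ is fixed. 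For instance two $3$'s beat one $6$ since $f_\ell(3)^2\ge 9^{\Phi}$ whereas $f_\ell(6)\le 6^{\Psi}\le 6^{B}6^{\Phi}$. The same estimates can be organised through the recursion $n\,p_{E,\ell}(n)=\sum_{k=1}^n g_\ell(k)\,p_{E,\ell}(n-k)$ with $g_\ell(k)=\sum_{d\mid k,\,d\in S}d\,f_\ell(d)$, which matches the quantities appearing in Examples 1 and 2.

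Carrying this out residue by residue, and using crucially that $2,4\notin S$, yields for fixed $m$ and $\ell\to\infty$
\begin{align*}
m\equiv 0\pmod 3:\quad & p_{E,\ell}(m)\sim \tfrac{1}{(m/3)!}\,w^{m/3},\\
m\equiv 1\pmod 3:\quad & p_{E,\ell}(m)\sim \tfrac{1}{((m-1)/3)!}\,w^{(m-1)/3},\\
m\equiv 2\pmod 3:\quad & p_{E,\ell}(m)\sim
\begin{cases}
\tfrac{1}{((m-5)/3)!}\,w^{(m-5)/3}f_\ell(5), & 5\in S,\\[2pt]
\tfrac{1}{((m-2)/3)!}\,w^{(m-2)/3}, & 5\notin S.
\end{cases}
\end{align*}
Here a residue $1$ is completed by a single free $1$, while a residue $2$ must be completed either by one $5$ (the most efficient available part congruent to $2$, when $5\in S$) or else by two free $1$'s; no part equal to $2$ or $4$ is available to do this more cheaply, which is exactly what creates the power gaps exploited below. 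No part $m'>4$ other than a single $5$ in the residue-$2$ case enters the leading term, which accounts for the stated independence of the integers $k\in S$ with $k>4$.

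Feeding these into $\Delta_{E,\ell}(n)=p_{E,\ell}(n)^2-p_{E,\ell}(n-1)\,p_{E,\ell}(n+1)$ settles both residues. For $n\equiv 0\pmod 3$, $n=3t$ with $t\ge 2$, the term $p_{E,\ell}(n)^2$ is of order $w^{2t}$, whereas $p_{E,\ell}(n-1)p_{E,\ell}(n+1)$ is a residue-$2$ times a residue-$1$ value and is of order $w^{2t-1}$ (if $5\notin S$) or $w^{2t-2}f_\ell(5)$ (if $5\in S$); in either case the ratio to $w^{2t}$ tends to $0$ by the same $(a/b)^{\Phi}$ estimate, so $\Delta_{E,\ell}(n)\sim w^{2t}/(t!)^2>0$. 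For $n\equiv 2\pmod 3$, $n=3t+2$ with $t\ge 1$, the roles reverse: $p_{E,\ell}(n-1)p_{E,\ell}(n+1)$ is a residue-$1$ times a residue-$0$ value and is of order $w^{2t+1}$, while $p_{E,\ell}(n)^2$ is of order $w^{2t}$ (if $5\notin S$) or $w^{2t-2}f_\ell(5)^2$ (if $5\in S$), both strictly smaller; hence $\Delta_{E,\ell}(n)\sim -\,w^{2t+1}/(t!\,(t+1)!)<0$. This is exactly the asserted dichotomy, and the smallest cases $n=5,6$ agree with these formulas and are checked directly.

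The main obstacle is the first step: making the identification of the dominant partition and the negligibility of all others fully rigorous and uniform. The subtlety is that $f_\ell(m')$ is pinned down only in the range $[m'^{\Phi},m'^{\Psi}]$, so a priori a large part with weight near $m'^{\Psi}$ could compete with several $3$'s whose weights sit near $3^{\Phi}$. One must verify that the maximal log-weight $\pi(m):=\max\sum_{m'}j_{m'}\log m'$ is attained only by the claimed partition, and that any competing partition of log-weight $\sigma<\pi(m)$ contributes a fraction at most $\exp\!\bigl(\Psi\sigma-\Phi\pi(m)\bigr)\le\exp\!\bigl(\Phi(\sigma-\pi(m))+B\sigma\bigr)\to 0$, where conditions (2)--(3) of Definition \ref{main:def} are precisely what keep the exponent negative and growing. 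Once the asymptotics hold, the sign of $\Delta_{E,\ell}(n)$ is immediate, because $p_{E,\ell}(n)^2$ and $p_{E,\ell}(n-1)p_{E,\ell}(n+1)$ are of strictly different orders of magnitude, their ratio tending to $0$ or to $\infty$; no comparison of leading coefficients is needed. This clean separation is a direct consequence of $2,4\in E$, and it is what distinguishes the present case from balanced situations such as $1,2,3\in S$, where the two products share the same leading order and a genuine coefficient comparison becomes necessary.
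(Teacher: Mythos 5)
Your proposal is correct and follows essentially the same route as the paper: you identify the maximal product of parts in each residue class modulo $3$ (splitting into the subcases $5\in S$ and $5\in E$, exactly as the paper does in Sections \ref{subsection44} and \ref{subsection S=(1,3)}), and you deduce the sign of $\Delta_{E,\ell}(n)$ from the fact that $p_{E,\ell}(n)^2/\bigl(p_{E,\ell}(n-1)\,p_{E,\ell}(n+1)\bigr)$ behaves like $Q_E(n)^{\Phi(\ell)}$ up to factors controlled by $\Psi(\ell)-\Phi(\ell)\leq B$, which is precisely the content of Proposition \ref{prop: estimations} and Theorem \ref{thm: basic}. The only cosmetic difference is that you expand the product via binomial coefficients over $S$-partitions rather than through the exponential formula with $g_{E,\ell}$, but the dominant-partition analysis and the $(a/b)^{\Phi}$ versus $c^{B}$ comparison are the same mechanism.
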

The proof of this Theorem \ref{th13} and the next Theorem \ref{th135} is provided in Section \ref{subsection44},  Section \ref{subsection S=(1,3)}.
\begin{theorem}
\label{th135}Let $\{f_{\ell}\}_{\ell}$ be given as in Definition \ref{main:def}.
Then we have for all $n \geq 4$ with $n \equiv 1 \pmod{3}$ the following log-concavity and log-convexity properties. Let $1,3 \in S$ and $2,4 \in E$. Then we have for $\ell$ large dependent on $n$ and $5 \in S$ that
$\Delta _{E,\ell }\left( n\right) <0$ for almost all $\ell$.
\end{theorem}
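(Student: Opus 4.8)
The plan is to read off the $\ell\to\infty$ asymptotics of the three quantities $p_{E,\ell}(n-1),p_{E,\ell}(n),p_{E,\ell}(n+1)$ directly from the product and then compare the growth orders of $p_{E,\ell}(n)^2$ and $p_{E,\ell}(n-1)\,p_{E,\ell}(n+1)$, exactly in the spirit of Examples 1 and 2. Expanding
\[
\prod_{m\in S}\left(1-q^m\right)^{-f_\ell(m)}=\prod_{m\in S}\sum_{j\ge 0}\binom{f_\ell(m)+j-1}{j}q^{mj}
\]
gives $p_{E,\ell}(m)=\sum_{\lambda}\prod_{d}\binom{f_\ell(d)+a_d(\lambda)-1}{a_d(\lambda)}$, where $\lambda$ runs over the partitions of $m$ with all parts in $S$ and $a_d(\lambda)$ denotes the multiplicity of the part $d$. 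Setting $N(\lambda):=\prod_d d^{a_d(\lambda)}$ for the product of the parts, condition (1) of Definition \ref{main:def} sandwiches the contribution of a fixed $\lambda$ between $N(\lambda)^{\Phi(\ell)}$ and $N(\lambda)^{\Psi(\ell)}$ up to a factor independent of $\ell$. Hence, as $\ell\to\infty$, each $p_{E,\ell}(m)$ is governed by the partition maximizing $N(\lambda)$, and since the second–largest value of $N$ is smaller by a factor bounded away from $1$, the bounded–gap condition (2) (together with (3)) guarantees that the subdominant partitions are negligible. This reduces the theorem to a purely combinatorial product–maximization problem plus one elementary comparison.

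Next I would prove the maximization lemma for $S\supseteq\{1,3,5\}$ with $2,4\notin S$: the unique $N$-maximizing $S$-partition of $m$ is $(3^{m/3})$ if $m\equiv 0$, it is $(3^{(m-1)/3},1)$ if $m\equiv 1$, and it is $(3^{(m-5)/3},5)$ if $m\equiv 2\pmod 3$ (for $m\ge 5$). The key points are that any part $\ge 6$ can be split into parts from $\{3,5\}$ (or replaced, as in $7\to(3,3,1)$) to strictly increase the product, so no part exceeding $5$ ever occurs regardless of which integers $\ge 6$ lie in $S$; that $2,4$ are forbidden; and that threes are more efficient than fives ($3^{1/3}>5^{1/5}$), with a single $5$ used only to absorb the residue $2$. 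This is precisely where the extra hypothesis $5\in S$ of Theorem \ref{th135} enters, since it is needed to realize the maximizer $(3^{(m-5)/3},5)$ for $m\equiv 2\pmod 3$.

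Now write $n=3k+1$, so $n-1\equiv 0$, $n\equiv 1$, $n+1\equiv 2\pmod 3$, with maxima $N=3^{k}$, $N=3^{k}$, and $N=5\cdot 3^{k-1}$ respectively. Using $f_\ell(1)=1$ and the binomial asymptotics $\binom{f_\ell(3)+k-1}{k}\sim f_\ell(3)^k/k!$, the dominant terms are $p_{E,\ell}(n-1)\sim f_\ell(3)^k/k!$, $p_{E,\ell}(n)\sim f_\ell(3)^k/k!$, and $p_{E,\ell}(n+1)\sim f_\ell(3)^{k-1}f_\ell(5)/(k-1)!$. The decisive fact is that the neighbours carry a strictly larger growth order than the centre, reflecting $M(n-1)M(n+1)=5\cdot 3^{2k-1}>3^{2k}=M(n)^2$. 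Quantitatively,
\[
\frac{p_{E,\ell}(n-1)\,p_{E,\ell}(n+1)}{p_{E,\ell}(n)^2}\sim \frac{k\,f_\ell(5)}{f_\ell(3)}\ \ge\ k\,3^{-B}\left(\tfrac{5}{3}\right)^{\Phi(\ell)}\longrightarrow\infty,
\]
where the inequality uses $f_\ell(5)\ge 5^{\Phi(\ell)}$, $f_\ell(3)\le 3^{\Psi(\ell)}$ and condition (2), and the divergence uses condition (3). Therefore $\Delta_{E,\ell}(n)=p_{E,\ell}(n)^2-p_{E,\ell}(n-1)\,p_{E,\ell}(n+1)<0$ for all sufficiently large $\ell$ (depending on $n$), which is the asserted strict log-convexity. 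The boundary case $k=1$ (i.e.\ $n=4$) is checked directly: the maximizers are $(3),(3,1),(5)$ and $\Delta_{E,\ell}(4)\sim f_\ell(3)\left(f_\ell(3)-f_\ell(5)\right)<0$.

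The main obstacle is not the final comparison, which is clean, but the rigorous control of the error terms that license replacing each $p_{E,\ell}(m)$ by its single dominant summand. Here one must show $N(\lambda')^{\Psi(\ell)}=o\!\left(N(\lambda^\ast)^{\Phi(\ell)}\right)$ for every subdominant $\lambda'$, i.e.\ $N(\lambda')^{B}\bigl(N(\lambda')/N(\lambda^\ast)\bigr)^{\Phi(\ell)}\to 0$; this is exactly why the bounded-gap hypothesis (2) is indispensable, as otherwise the uncontrolled exponent $\Psi-\Phi$ could let a partition of smaller product overtake the maximizer. The combinatorial half of the obstacle is securing the \emph{uniqueness} of the maximizer uniformly in the unknown part of $S$ above $5$; the argument that no part $\ge 6$ is ever used (and that no distinct partition attains the same maximal product) must be made robust to all choices of $S\cap\{6,7,\dots\}$, so that the constant in front of the leading term is unambiguous and the sign of $\Delta_{E,\ell}(n)$ is determined.
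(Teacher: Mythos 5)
Your proof is correct, and its engine is the same as the paper's: everything reduces to the maximal product of parts, namely $M_E(n-1)M_E(n+1)=5\cdot 3^{2k-1}>3^{2k}=M_E(n)^2$ for $n=3k+1$, combined with the bounded-gap condition (2) and $\Phi(\ell)\to\infty$. The packaging, however, differs. The paper expands via the exponential formula $\exp\left(\sum_{m\geq1} g_{E,\ell}(m)q^m/m\right)$ of (\ref{exp}) and extracts only crude two-sided bounds (Proposition \ref{prop: estimations}); Theorem \ref{thm: basic} then gives the sign as soon as $Q_E(n)=M_E(n)^2/\bigl(M_E(n-1)M_E(n+1)\bigr)\neq 1$, and Section \ref{subsection44} computes $Q_E(n)=3/5<1$ for $n\equiv1\pmod{3}$ from Lemma \ref{Lemma: a_2>2 a_j>2a_2}. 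You instead expand in binomial coefficients over $S$-partitions and prove genuine asymptotics for each $p_{E,\ell}(m)$, which is sharper but forces the uniqueness-of-maximizer discussion that you flag as the ``main obstacle.'' That obstacle is in fact vacuous for this theorem: since the two sides of the comparison have different growth orders (your ratio $k\,f_\ell(5)/f_\ell(3)\to\infty$), the constant prefactors are irrelevant, and even if several partitions attained $M_E$ the argument would go through with the single dominant term replaced by a finite positive sum. Uniqueness genuinely matters only in the boundary cases $Q_E(n)=1$, which is exactly where the paper invokes it (Theorem \ref{thm: involved}); your combinatorial claims themselves (no part $\geq 6$ ever appears regardless of $S\cap\{6,7,\dots\}$, at most one $5$, and the resulting maximizers) coincide with the paper's Lemma \ref{Lemma: a_2>2 a_j>2a_2} and its use in Section \ref{subsection44}. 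So your route buys precise asymptotics and explicit leading constants at the price of extra care that the sign statement does not require, while the paper's route reaches the conclusion more cheaply; also note that your separate check at $n=4$ is subsumed by the general computation with $k=1$.
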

We leave the remaining possibility $5 \in E$
as an open challenge to the reader.

\begin{example}
\label{example13}

\begin{enumerate}
\item  Let $S=\left\{ 1,3\right\}
$.
We obtain
\begin{eqnarray*}
\sum _{n=0}^{\infty }
p_{E,\ell }\left( n\right) q^{n}
&=&
\left( 1-q\right) ^{-1}
\left( 1-q^{3}\right)
^{-f_{\ell }\left( 3\right) }
=\left( 1+q+q^{2}\right)
\left( 1-q^{3}\right)
^{-f_{\ell }\left( 3\right) -1}\\
&=&\left( 1+q+q^{2}\right)
\sum _{n=0}^{\infty }
\binom{n
+f_{\ell }\left( 3\right) }{n}q^{3n}.
\end{eqnarray*}
This shows that
$p_{E,\ell }\left( 3n\right)
=p_{E,\ell }\left( 3n+1\right)
=p_{E,\ell }\left( 3n+2\right)$.
This shows
$\Delta _{E,\ell }\left( n\right)
=0$
for $n\equiv 1\pmod{3}$.

\item  Now let $n\geq6$, $1,3,7
\in S$ but
$2,4,5,6,8,9\in E$.
Then the
maximizing partitions are
\begin{equation}
\begin{array}{rl}
\left( 3^{\frac{n}{3}}\right) ,
&n\equiv 0\pmod{3},\\
\left( 3^{\frac{n-1}{3}},
1^{1}\right) ,
&n\equiv 1\pmod{3},\\
\left( 3^{\frac{n-2}{3}},
1^{2}\right) ,
&n\equiv 2\pmod{3}.
\end{array}
\label{max13}
\end{equation}
The corresponding partitions
yielding the second largest
products are
\[
\begin{array}{rl}
\left( 3^{\frac{n-
3}{3}},
1^{3}\right) ,
&n\equiv 0\pmod{3},\\
\left( 7^{1},
3^{\frac{n-7}{3}}\right) ,
&n\equiv 1\pmod{3},\\\left( 7^{1},3^{\frac{n-8}{3}},1^{1}\right) ,&n\equiv 2\pmod{3}.
\end{array}
\]
In the following we already utilize the arithmetic function $g_{E,\ell}(n)$, which is introduced in (\ref{defg}).

For the leading and subleading
growth terms of $p_{E,\ell }\left( n\right) $
for sufficiently large $\ell $
we obtain
\begin{eqnarray*}
&&p_{E,\ell }\left( n\right) \sim \beta _{n}\left( \ell \right) \\
&=&\left\{
\begin{array}{ll}
\frac{1}{\left( \frac{n}{3}\right) !}
\left( \frac{g_{E,\ell }\left( 3\right) }{3}\right) ^{\frac{n}{3}}
+\frac{2}{3}\frac{1}{\left( \frac{n-3}{3}\right) !}
\left( \frac{g_{E,\ell }\left( 3\right) }{3}\right) ^{\frac{n-3}{3}},
&n\equiv 0\pmod{3},\\
\frac{1}{\left( \frac{n-1}{3}\right) !}
\left( \frac{g_{E,\ell }\left( 3\right) }{3}\right) ^{\frac{n-1}{3}}
+\frac{1}{\left( \frac{n-7}{3}\right) !}
\frac{g_{E,\ell }\left( 7\right) }{7}
\left( \frac{g_{E,\ell }\left( 3\right) }{3}\right) ^{\frac{n-7}{3}},
&n\equiv 1\pmod{3},\\
\frac{1}{\left( \frac{n-2}{3}\right) !}
\left( \frac{g_{E,\ell }\left( 3\right) }{3}\right) ^{\frac{n-2}{3}}
+\frac{1}{\left( \frac{n-8}{3}\right) !}
\frac{g_{E,\ell }\left( 7\right) }{7}
\left( \frac{g_{E,\ell }\left( 3\right) }{3}\right) ^{\frac{n-8}{3}},
&n\equiv 2\pmod{3}.
\end{array}
\right.
\end{eqnarray*}
For $n\equiv 1\pmod{3}$
we obtain
\begin{eqnarray*}
&&\left( \beta _{n}\left( \ell \right) \right) ^{2}
-\beta _{n-1}\left( \ell \right) \beta _{n+1}\left( \ell \right) \\
&=&\left( \frac{g_{E,\ell }\left( n\right) }{3}\right) ^{\frac{2n-14}{3}}
( \frac{1}{\left( \frac{n-1}{3}\right) !\left( \frac{n-7}{3}\right) !}
\frac{g_{E,\ell }\left( 7\right) }{7}
\left( \frac{g_{E,\ell }\left( 3\right) }{3}\right) ^{2}
+\left( \frac{1}{\left( \frac{n-7}{3}\right) !}
\frac{g_{E,\ell }\left( 7\right) }{7}\right) ^{2}\\
&&{}-\frac{2}{3}
\frac{1}{\left( \frac{n-1}{3}\right) !\left( \frac{n-4}{3}\right) !}
\left( \frac{g_{E,\ell }\left( 3\right) }{3}\right) ^{3}
-\frac{2}{3}
\frac{1}{\left( \frac{n-4}{3}\right) !\left( \frac{n-7}{3}\right) !}
\frac{g_{E,\ell }\left( 7\right) }{7}\frac{g_{E,\ell }\left( 3\right) }{3}
).
\end{eqnarray*}
By (\ref{eq:fundamental}) the leading growth term of that difference is determined by
\[
\frac{g_{E,\ell }\left( 7\right) }{7}
\left( \frac{g_{E,\ell }\left( 3\right) }{3}\right) ^{2}\geq
63^{\Phi \left( \ell \right) }
\]
we obtain
\[
\Delta _{E,\ell }\left( n\right) \sim
\frac{1}{\left( \frac{n-1}{3}\right) !\left( \frac{n-7}{3}\right) !}
\frac{g_{E,\ell }\left( 7\right) }{3}
\left( \frac{g_{E,\ell }\left( 3\right) }{3}\right) ^{\frac{2
n-8}{3}}>0.
\]

\item  Let now $n\geq6$, $1,3,8\in S$ but $2,4,5,6,7,9,10\in E$. The maximizing partitions
are as in (\ref{max13}) but the second largest are in this case
\[
\begin{array}{rl}
\left( 3^{\frac{n-3}{3}},1^{3}\right) , & n\equiv 0\pmod{3}, \\
\left( 3^{\frac{n-4}{3}},1^{4}\right) , & n\equiv 1\pmod{3}, \\
\left( 8^{1},3^{\frac{n-8}{3}}\right) , & n\equiv 2\pmod{3}.
\end{array}
\]
Therefore we obtain for the leading and subleading growth terms
\begin{eqnarray*}
&&p_{E,\ell }\left( n\right) \sim \beta _{n}\left( \ell \right) \\ &=&\left\{
\begin{array}{ll}
\frac{1}{\left( \frac{n}{3}\right) !}
\left( \frac{g_{E,\ell }\left( 3\right) }{3}\right) ^{\frac{n}{3}}
+\frac{2}{3}\frac{1}{\left( \frac{n-3}{3}\right) !}
\left( \frac{g_{E,\ell }\left( 3\right) }{3}\right) ^{\frac{n-3}{3}},
&n\equiv 0\pmod{3},\\
\frac{1}{\left( \frac{n-1}{3}\right) !}
\left( \frac{g_{E,\ell }\left( 3\right) }{3}\right) ^{\frac{n-1}{3}}
+\frac{2}{3}\frac{1}{\left( \frac{n-4}{3}\right) !}
\left( \frac{g_{E,\ell }\left( 3\right) }{3}\right) ^{\frac{n-4}{3}},
&n\equiv 1\pmod{3},\\
\frac{1}{\left( \frac{n-2}{3}\right) !}
\left( \frac{g_{E,\ell }\left( 3\right) }{3}\right) ^{\frac{n-2}{3}}
+\frac{1}{\left( \frac{n-8}{3}\right) !}\frac{g_{E,\ell }\left( 8\right) }{8}
\left( \frac{g_{E,\ell }\left( 3\right) }{3}\right) ^{\frac{n-8}{3}},
&n\equiv 2\pmod{3}.
\end{array}
\right.
\end{eqnarray*}
Let now $n\equiv 1\pmod{3}$. Then we obtain
\begin{eqnarray*}
&&\left( \beta _{n}\left( \ell \right) \right) ^{2}
-\beta _{n-1}\left( \ell \right) \beta _{n+1}\left( \ell \right) \\
&=&\left( \frac{g_{E,\ell }\left( 3\right) }{3}\right) ^{\frac{2n-11}{3}}
( \frac{2}{3}
\frac{1}{\left( \frac{n-1}{3}\right) !
\left( \frac{n-4}{3}\right) !}
\left( \frac{g_{E,\ell }\left( 3\right) }{3}\right) ^{2}
+\left( \frac{2}{3}\frac{1}{\left( \frac{n-4}{3}\right) !}\right) ^{2}
\frac{g_{E,\ell }\left( 3\right) }{3}\\
&&{}-\frac{1}{\left( \frac{n-1}{3}\right) !\left( \frac{n-7}{3}\right) !}
\frac{g_{E,\ell }\left( 8\right) }{8}\frac{g_{E,\ell }\left( 3\right) }{3}
-\frac{2}{3}
\frac{1}{\left( \frac{n-4}{3}\right) !\left( \frac{n-7}{3}\right) !}
\frac{g_{E,\ell }\left( 8\right) }{8}
).
\end{eqnarray*}
Again by (\ref{eq:fundamental}) the leading growth term of this difference is determined by
$\frac{g_{E,\ell }\left( 8\right) }{8}\frac{g_{E,\ell }\left( 3\right) }{3}
\geq 24^{\Phi \left( \ell \right) }$
we obtain
$\Delta _{E,\ell }\left( n\right) \sim
-\frac{1}{\left( \frac{n-1}{3}\right) !\left( \frac{n-7}{3}\right) !}
\frac{g_{E,\ell }\left( 8\right) }{8}\left(
\frac{g_{E,\ell }\left( 3\right) }{3}\right) ^{\frac{2n-8}{3}}
<0$.

\end{enumerate}
\end{example}

\subsection{The
case $1,r,r+1 \in S$ for $r\geq 3$ and $2,3 \ldots r-1 \in E$}

It is quite surprising that the following result has no subcases.
\begin{theorem}
\label{th1rr+1}Let $\{f_{\ell}\}_{\ell}$ be given as in Definition \ref{main:def}.
Let $r \geq 3$ and $1,r,r+1 \in S$ for $r\geq 3$ and 
$2,3 \ldots r-1 \in E$. Then for
\begin{equation*}
n \geq \frac{r(r-1)(3r-1)+4}{2}
\end{equation*}
we have $\Delta _{E,\ell }\left( n\right) >
0$ if
$n \not \equiv r-1 \pmod{r}$
for almost all $\ell$.
And $\Delta _{E,\ell }\left( n\right)
< 0$ in the remaining case 
$n\equiv r-1 \pmod{r}$ for almost all $\ell$.
\end{theorem}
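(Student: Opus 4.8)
The plan is to reduce the whole statement to the behaviour of the leading growth terms coming from the \emph{maximizing partitions}, in the spirit of Examples \ref{example13} and the machinery behind (\ref{defg})–(\ref{eq:fundamental}). First I would determine, for each residue class, the partition of $n$ into parts of $S$ that maximizes the product of its parts, since this product governs the exponential-in-$\ell$ growth of $p_{E,\ell}(n)$. Writing $n=qr+j$ with $q=\floor{n/r}$ and $0\le j\le r-1$, I claim this partition is $(r^{q})$ when $j=0$ and $\bigl((r+1)^{j},r^{q-j}\bigr)$ when $1\le j\le r-1$, with product
\[
G(n):=(r+1)^{j}\,r^{q-j}.
\]
The short lemma behind this is that among the admissible parts $r$ is the most product-efficient (as $\tfrac{\log s}{s}$ decreases for $s\ge 3$ and $2,\dots,r-1\notin S$), that a unit part is always better absorbed into an $r$ to form an $r+1$ (since $r+1>r\cdot 1$), and that $j$ copies of $r+1$ beat every alternative covering the residue (e.g. $(r+1)^{j}>(r+j)\,r^{j-1}$ for $j\ge 2$, and two parts $r$ beat one part $2r$ because $r^{2}>2r$). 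The hypothesis $n\ge\tfrac{r(r-1)(3r-1)+4}{2}$ guarantees that $q$ is large enough for this regular shape, and for the regular shape of the second-best partition, to be attained simultaneously for $n-1,n,n+1$ in every residue class.

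By the asymptotic framework of the paper, for fixed $n$ and $\ell\to\infty$ one has $p_{E,\ell}(n)\sim A_n(\ell)$, where the coefficient attached to the maximizing partition is
\[
A_n(\ell)=\frac{1}{j!\,(q-j)!}\left(\frac{g_{E,\ell}(r+1)}{r+1}\right)^{\!j}\left(\frac{g_{E,\ell}(r)}{r}\right)^{\!q-j},
\]
using that all intermediate divisors lie in $E$, so that $g_{E,\ell}(r)/r=f_\ell(r)+1/r\asymp r^{\Phi(\ell)}$ and $g_{E,\ell}(r+1)/(r+1)=f_\ell(r+1)+1/(r+1)\asymp (r+1)^{\Phi(\ell)}$. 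Every other partition has a strictly smaller product, hence by (\ref{eq:fundamental}) together with condition (2) of Definition \ref{main:def} contributes a factor smaller by $(r/(r+1))^{\Phi(\ell)}=o(1)$; this is precisely the role of the bounded window $\Psi-\Phi\le B$ combined with $\Phi\to\infty$.

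I then split into three regimes. For $j=0$ the relevant products satisfy $G(n)^{2}/\bigl(G(n-1)G(n+1)\bigr)=r^{r+1}/(r+1)^{r}>1$ (equivalently $r>(1+1/r)^{r}$, true since $(1+1/r)^{r}<e\le r$), so $A_n^{2}$ dominates $A_{n-1}A_{n+1}$ at the exponential scale and $\Delta_{E,\ell}(n)>0$. For $j=r-1$ the same quotient equals $(r+1)^{r}/r^{r+1}<1$, so $A_{n-1}A_{n+1}$ dominates and $\Delta_{E,\ell}(n)<0$; in both cases the window $B$ is absorbed because the base ratio is bounded away from $1$ while $\Phi\to\infty$. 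In the middle regime $1\le j\le r-2$ one checks $G(n)^{2}=G(n-1)G(n+1)$, so the $g$-monomials in $A_n^{2}$ and $A_{n-1}A_{n+1}$ coincide exactly and only the rational prefactors differ, giving
\[
\frac{A_n^{2}}{A_{n-1}A_{n+1}}=\frac{(j+1)(q-j+1)}{j\,(q-j)}>1 .
\]
Thus $A_n^{2}-A_{n-1}A_{n+1}$ is a fixed positive fraction of the top-order term $A_n^{2}$, while all cross terms and subleading partitions are smaller by $O\bigl((r/(r+1))^{\Phi(\ell)}\bigr)$, so $\Delta_{E,\ell}(n)>0$ for large $\ell$. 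Combining the three regimes yields $\Delta_{E,\ell}(n)>0$ exactly for $n\not\equiv r-1\pmod r$ and $<0$ for $n\equiv r-1\pmod r$.

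The main obstacle is the middle regime $1\le j\le r-2$: there the leading exponential orders of $p_{E,\ell}(n)^{2}$ and $p_{E,\ell}(n-1)\,p_{E,\ell}(n+1)$ are \emph{equal}, so the sign of $\Delta$ is decided by the exact rational coefficients, and one must prove uniformly that no other partition (in particular none using an admissible part $\ge r+2$) contributes at that same top order. Establishing this uniformly in $\ell$ across all residue classes, and fixing the threshold so that the maximizing and second partitions retain their regular shape for $n-1,n,n+1$, is the crux; it is exactly where conditions (2) and (3) of Definition \ref{main:def} and the fundamental inequality (\ref{eq:fundamental}) are used, and it also explains why the result has no subcases and is independent of the membership of the integers $\ge r+2$.
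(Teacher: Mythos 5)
Your proposal is correct and follows essentially the same route as the paper: your maximizing partitions $\bigl((r+1)^{j},r^{q-j}\bigr)$ are exactly those of Lemma \ref{Lemma: a_3=a_2+1}, your exponential-scale dominance argument for $j=0$ and $j=r-1$ is Theorem \ref{thm: basic} as applied in Section \ref{subsection S=(1,r,r+1)}, and your prefactor ratio $\frac{(j+1)(q-j+1)}{j(q-j)}>1$ in the middle regime is precisely the paper's application of Theorem \ref{thm: involved} in Section \ref{problematic subsection S=(1,r,r+1)}, since their quotient $\frac{j+1}{j}\cdot\frac{n-j(r+1)+r}{n-j(r+1)}$ equals your expression because $n-j(r+1)=(q-j)r$. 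The error control you describe via $\Phi\to\infty$ together with $\Psi-\Phi\le B$ is likewise the content of those two theorems, so there is no gap.
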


This is proven in Section~\ref{subsection S=(1,r,r+1)} and
Section~\ref{problematic subsection S=(1,r,r+1)}.

\section{Preliminaries}\label{Preliminaries}
Let $\{f_{\ell}\}_{\ell}$ be given as in Definition \ref{main:def}.
We associate
a sequence $\{g_{E,\ell }\}_{\ell}$ of functions on $\mathbb{N}$
via the following well-known formula
\begin{equation}\label{exp}
\sum_{n=0}^{\infty} p_{E,\ell }(n) \, q^n = 
\prod_{n \in
{S}}
\left( 1 - q^n \right)^{-f_{\ell}(n)} = 
\exp \left( \sum_{n=1}^{\infty} g_{E,\ell }(n) \frac{q^n}{n}\right).
\end{equation}
We have
\begin{equation}
g_{E,\ell }(n):= \sum_{\substack{d \mid n \\ d \notin E}} d \, f_{\ell}(d).
\label{defg}
\end{equation}
For example, let $f_{\ell}(n)= n^{\ell -1}$ and an exception set $E$ be given.
Then $g_{E,\ell }(n) =\sum_{\substack{d \mid n \\ d \notin E}}
d^{\ell}$ which we denote by
$\sigma_{E,\ell }(n)$. If $\ell =1$ we usually drop the index and as usual we have
$\sigma_{\ell}(n) = \sigma_{\emptyset,\ell }(n)$ if we have no exceptions.
Since $1$ is no exception we have ensured that the largest divisor
$n_{{S}}$
of $n$ in $S$ exists. The following bounds for $g_{E,\ell }(n)$
will turn out to be very useful.
\begin{lemma} Let $E$ and $\{f_{\ell}\}_{\ell}$ be given. Then
\begin{equation}\label{eq:fundamental}
\left( n_{S}\right)^{\Phi(\ell)+1} \leq g_{E,\ell }(n) 
\leq \sigma_{E,1}(n) \, \left(n_{S}\right)^{\Psi(\ell)}
\end{equation}
\end{lemma}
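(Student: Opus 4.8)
The plan is to read off both inequalities directly from the defining sum (\ref{defg}) combined with the pointwise bounds of Definition \ref{main:def}(1). Recall that $g_{E,\ell}(n)=\sum_{d\mid n,\, d\notin E} d\,f_{\ell}(d)$, that $S=\mathbb{N}\setminus E$, and that $n_{S}$ is the largest divisor of $n$ lying in $S$; since $1\in S$ this largest divisor exists, and every index $d$ occurring in the sum satisfies $1\le d\le n_{S}$. The whole proof is then a matter of, on one side, retaining a single favorable term, and on the other, majorizing every term by the worst admissible power.

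For the lower bound I would simply discard all but one summand. Because $f_{\ell}$ is $\mathbb{N}$-valued, every term $d\,f_{\ell}(d)$ is nonnegative, so $g_{E,\ell}(n)$ is at least the single term indexed by the divisor $d=n_{S}$, which does lie in $S$ and divides $n$. Applying the lower estimate $f_{\ell}(n_{S})\ge (n_{S})^{\Phi(\ell)}$ from Definition \ref{main:def}(1) then yields
\[
g_{E,\ell}(n)\ \ge\ n_{S}\,f_{\ell}(n_{S})\ \ge\ (n_{S})^{\Phi(\ell)+1},
\]
which is the left-hand inequality of (\ref{eq:fundamental}).

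For the upper bound I would bound each summand and factor out the largest power. The estimate $f_{\ell}(d)\le d^{\Psi(\ell)}$ gives $d\,f_{\ell}(d)\le d^{1+\Psi(\ell)}$, and since each divisor in the sum satisfies $d\le n_{S}$, I want $d^{\Psi(\ell)}\le (n_{S})^{\Psi(\ell)}$. This monotonicity of the power map is the only point that needs a remark, and it is the single (very minor) obstacle: it requires $\Psi(\ell)\ge 0$. But for $n\ge 2$ the chain $1\le f_{\ell}(n)\le n^{\Psi(\ell)}$ (the first inequality holding because $f_{\ell}$ takes values in $\mathbb{N}$) forces $n^{\Psi(\ell)}\ge 1$, hence $\Psi(\ell)\ge 0$ for every $\ell$. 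With that in hand,
\[
g_{E,\ell}(n)\ \le\ \sum_{\substack{d\mid n\\ d\notin E}} d\,(n_{S})^{\Psi(\ell)}
\ =\ (n_{S})^{\Psi(\ell)}\sum_{\substack{d\mid n\\ d\notin E}} d
\ =\ \sigma_{E,1}(n)\,(n_{S})^{\Psi(\ell)},
\]
using that $\sigma_{E,1}(n)=\sum_{d\mid n,\, d\notin E} d$. This gives the right-hand inequality and completes the proof; beyond the one-line verification that $\Psi(\ell)\ge 0$, everything reduces to dropping nonnegative terms (lower bound) and crude term-by-term majorization (upper bound).
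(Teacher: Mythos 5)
Your proof is correct and follows essentially the same route as the paper: the lower bound by retaining the single summand $d=n_{S}$ and applying $f_{\ell}(n_{S})\geq (n_{S})^{\Phi(\ell)}$, and the upper bound by majorizing each term via $f_{\ell}(d)\leq (n_{S})^{\Psi(\ell)}$ and summing to get $\sigma_{E,1}(n)$. Your explicit verification that $\Psi(\ell)\geq 0$ (needed for the monotonicity step $d^{\Psi(\ell)}\leq (n_{S})^{\Psi(\ell)}$) is a point the paper leaves implicit, but it is a welcome clarification rather than a divergence.
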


\begin{proof}
We have
$g_{E,\ell }\left( n\right)
=\sum _{\substack{d\mid n \\ d\notin E}}df_{\ell }\left( d\right)
\geq n_{S}f_{\ell }\left( n_{S}\right)
\geq n_{S}^{\Phi \left(
\ell \right) +1}$
and
$g_{E,\ell }\left( n\right)
\leq \sum _{\substack{d\mid n \\ d\notin E}}dn_{S}^{\Psi \left( \ell \right) }
\leq \sigma _{E,1}\left( n\right) n_{S}^{\Psi \left( \ell \right) }$.
\end{proof}


From the second equation of (\ref{exp}) and (\ref{eq:fundamental}), we get that
\begin{eqnarray*}
p_{E,\ell }\left( n\right) & =  & 
\sum _{k\leq n}\sum _{\substack{m_{1},\ldots ,m_{k} \geq 1 \\ m_{1}+\ldots +m_{k}=n}}
\frac{1}{k!} \, \frac{g_{E,\ell }\left( m_{1}\right) 
\cdots g_{E,\ell }\left( m_{k}\right) }{m_{1}\cdots m_{k}} \\
&\leq &
\sum _{k\leq n}\sum _{\substack{m_{1},\ldots ,m_{k} \geq 1 \\ m_{1}+\ldots +m_{k}=n}}
\frac{1}{k!} \, 
\frac{\sigma_{E,1}\left( m_{1}\right) 
\cdots \sigma_{E,1}\left( m_{k}\right) }{m_{1}\cdots m_{k}}
\, \left(m_{1,S} \cdots m_{k,S}\right)^{\Psi(\ell)},
\end{eqnarray*}
where $m_{j,S}$ is the largest divisor of $m_j$ belonging to the support $S$
($S=\mathbb{N}\setminus E$). Now, let us set
\begin{equation*}\label{max:M E}
M_{E}\left( n\right) :=\max _{k\leq n}
\max _{\substack{m_1,\ldots,m_k \in S \\ m_{1}+\ldots +m_{k}=n}}m_{1}\cdots m_{k}.
\end{equation*}
If it is clear from the context, we will write $M_E$ instead of $M_E(n)$. From the above discussion, we obtain the upper bound of the form
\begin{equation*}
p_{E,\ell }(n) \leq 
\sum _{k\leq n}\sum _{\substack{m_{1},\ldots ,m_{k} \geq 1 \\ m_{1}+\ldots +m_{k}=n}}
\frac{1}{k!} \, 
\frac{\sigma_{E,1}\left( m_{1}\right) 
\cdots \sigma_{E,1}\left( m_{k}\right) }{m_{1}\cdots m_{k}}
\, M_{E}^{\Psi(\ell)} = p_E(n) \, \left(M_E(n)\right)^{\Psi(\ell)}.
\label{pupper}
\end{equation*}
Next, we derive the lower bound. It follows that
\begin{eqnarray*}
p_{E,\ell }(n) &\geq& 
\sum_{k=1}^n \frac{1}{k!}
\sum_{\substack{m_1,\ldots,m_k\in S\\ m_1+\cdots+m_k=n}}
\frac{g_{E,\ell }(m_1)\cdots g_{E,\ell }(m_k)}{m_1\cdots m_k} \nonumber \\
&\geq &
\sum_{k=1}^n \frac{1}{k!}
\sum_{\substack{m_1,\ldots,m_k\in S\\ m_1+\cdots+m_k=n}}
\left(m_1 \cdots m_k\right)^{\Phi(\ell)} \nonumber \\
&\geq& \sum_{k=1}^n\frac{1}{k!}\sum_{\substack{m_1,\ldots,m_k\in S \\
m_1+\cdots+m_k=n \\ m_1\cdots m_k=M_{E}(n)}}\left(M_E(n)\right)^{\Phi(\ell)}.
\label{plower}
\end{eqnarray*}
This leads us to the following general result.

\begin{proposition}\label{prop: estimations}
Let $n\geq 1$. Let $\{f_{\ell}\}_{\ell}$ be given as in Definition \ref{main:def}.
Let $E$ be the exception set.
Let $\Phi$ and $\Psi$ determine the growth of $f_{\ell}$ as given in Definition \ref{main:def}.
Then there exist positive
constants $C_{E}^{\downarrow}(n)$ and $C_{E}^{\uparrow}(n)$ 
dependent only on $E$ and $n$ such that:
\begin{equation*}
\frac{C_{E}^{\downarrow}(n) 
\, M_{E}(n)^{2 \Phi(\ell)}}{\left( M_{E}(n-1) \, M_{E}(n+1)\right)^{\Psi(\ell)}}
\leq \frac{p_{E,\ell }(n)^2}{p_{E,\ell }(n-1) \, p_{E,\ell }(n+1)} \leq 
\frac{C_{E}^{\uparrow}(n) \, M_{E}(n)^{2 \Psi(\ell)}}{\left( M_{E}(n-1) \, M_{E}(n+1)\right)^{\Phi(\ell)}}.
\end{equation*}
\end{proposition}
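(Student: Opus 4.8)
The plan is to feed the two one-sided estimates derived immediately above into the ratio, keeping careful track of which bound is used in the numerator and which in the denominator. Abbreviate the two $\ell$-independent prefactors: write $p_E(m)$ for the quantity appearing in the upper bound $p_{E,\ell}(m) \leq p_E(m)\, M_E(m)^{\Psi(\ell)}$, and write $L_E(m)$ for the counting constant in the lower bound $p_{E,\ell}(m) \geq L_E(m)\, M_E(m)^{\Phi(\ell)}$, namely $L_E(m) = \sum_{k=1}^{m} \frac{1}{k!}\, N_k(m)$, where $N_k(m)$ counts the tuples $(m_1,\dots,m_k)\in S^k$ with $m_1+\cdots+m_k=m$ and $m_1\cdots m_k = M_E(m)$. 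Both $p_E(m)$ and $L_E(m)$ depend only on $E$ and $m$, exactly as the conclusion requires of the constants, and neither involves $\ell$.

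For the upper estimate I would bound the numerator from above and each denominator factor from below, which yields
\begin{equation*}
\frac{p_{E,\ell}(n)^2}{p_{E,\ell}(n-1)\,p_{E,\ell}(n+1)} \leq \frac{\left(p_E(n)\,M_E(n)^{\Psi(\ell)}\right)^2}{L_E(n-1)\,M_E(n-1)^{\Phi(\ell)}\, L_E(n+1)\,M_E(n+1)^{\Phi(\ell)}} = C_{E}^{\uparrow}(n)\,\frac{M_E(n)^{2\Psi(\ell)}}{\left(M_E(n-1)\,M_E(n+1)\right)^{\Phi(\ell)}},
\end{equation*}
reading off $C_{E}^{\uparrow}(n) = p_E(n)^2/\bigl(L_E(n-1)\,L_E(n+1)\bigr)$. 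For the lower estimate I would reverse the roles, bounding the numerator from below and the two denominator factors from above:
\begin{equation*}
\frac{p_{E,\ell}(n)^2}{p_{E,\ell}(n-1)\,p_{E,\ell}(n+1)} \geq \frac{\left(L_E(n)\,M_E(n)^{\Phi(\ell)}\right)^2}{p_E(n-1)\,M_E(n-1)^{\Psi(\ell)}\, p_E(n+1)\,M_E(n+1)^{\Psi(\ell)}} = C_{E}^{\downarrow}(n)\,\frac{M_E(n)^{2\Phi(\ell)}}{\left(M_E(n-1)\,M_E(n+1)\right)^{\Psi(\ell)}},
\end{equation*}
with $C_{E}^{\downarrow}(n) = L_E(n)^2/\bigl(p_E(n-1)\,p_E(n+1)\bigr)$. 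Here the monotonicity hypothesis $\Phi(\ell) \leq \Psi(\ell)$ from Definition \ref{main:def} is what forces the larger exponent $2\Psi(\ell)$ (respectively the smaller $2\Phi(\ell)$) to sit on $M_E(n)$ and the complementary exponent on the two neighbours, matching the stated expression.

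The only thing left to verify is strict positivity and finiteness of all four constants. Since $M_E(m)$ is by definition attained by at least one admissible composition, the count $N_k(m)$ is positive for some $k \leq m$, so $L_E(m) > 0$; and $p_E(m) > 0$ is immediate, being a sum of positive terms. Hence $C_{E}^{\uparrow}(n)$ and $C_{E}^{\downarrow}(n)$ are positive and finite, and in particular $p_{E,\ell}(m)>0$ so the ratio is well defined. I expect no genuine obstacle: the substantive work was already done in assembling the two-sided bounds on the single quantity $p_{E,\ell}(m)$, and the proposition is merely their formal combination. The one point deserving a word of care is the boundary behaviour at small $n$ (for instance $M_E(0)=1$ and $p_{E,\ell}(0)=1$ as empty products), which one checks directly so that the constants attached to $n=1$ are well defined.
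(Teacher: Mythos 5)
Your proposal is correct and follows essentially the same route as the paper: the paper derives exactly the two one-sided bounds $p_{E,\ell}(m)\leq p_{E}(m)\,M_{E}(m)^{\Psi(\ell)}$ and $p_{E,\ell}(m)\geq L_{E}(m)\,M_{E}(m)^{\Phi(\ell)}$ in the preliminaries and obtains the proposition precisely by the cross-combination you perform, with the same $\ell$-independent constants. The only cosmetic slip is your attribution of the exponent placement to the hypothesis $\Phi(\ell)\leq\Psi(\ell)$; that placement is automatic from which bound is applied to numerator and denominator, and the hypothesis is only needed so that the two-sided bound on $f_{\ell}$ in Definition \ref{main:def} is consistent.
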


Throughout the manuscript, we assume that $\Psi(\ell)
-\Phi(\ell) \leq
B$, where $B
\geq 0$ is fixed and independent of $\ell$. That requirement together with Proposition \ref{prop: estimations} delivers an efficient
criterion for the log-concavity or the log-convexity of
$p_{E,\ell }$ at $n$ for all sufficiently large values of $\ell$. More precisely, it turns out that the key information
lies at the value of the quotient
\begin{equation*}
Q_{E}(n) :=\frac{M_{E}(n)^{2}}{M_{E}(n-1) \, M_{E}(n+1)}
\label{q}
\end{equation*}
as the following criterion exhibits.
\begin{theorem}\label{thm: basic}
Let $n\geq 1$, $E$ and 
$\{f_{\ell}\}_{\ell}$ be given as in Definition \ref{main:def}.
Then $p_{E,\ell }(n)$ is strictly
\begin{enumerate}
\item  log-concave at $n$ for all sufficiently large values of $\ell$ if $Q_{E}(n)>1$;
\item  log-convex at $n$ for all sufficiently large values of $\ell$ if $Q_{E}(n)<1$.
\end{enumerate}
\end{theorem}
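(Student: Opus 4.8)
The plan is to feed the two-sided estimate of Proposition~\ref{prop: estimations} into logarithms, where the bounded quantity $\Psi(\ell)-\Phi(\ell)$ becomes harmless. Throughout I abbreviate $M:=M_E(n)$, $M_-:=M_E(n-1)$, $M_+:=M_E(n+1)$, so that $Q_E(n)=M^2/(M_-M_+)$, and write $R(\ell):=p_{E,\ell}(n)^2/\bigl(p_{E,\ell}(n-1)\,p_{E,\ell}(n+1)\bigr)$. Since $1\in S$ and $f_\ell(1)\geq 1$, every coefficient $p_{E,\ell}(m)$ is strictly positive; hence strict log-concavity at $n$ is equivalent to $R(\ell)>1$ and strict log-convexity to $R(\ell)<1$, which is what I will verify for large $\ell$.

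For part (1) I would start from the lower bound of Proposition~\ref{prop: estimations} and take logarithms to get
\[
\log R(\ell)\ \geq\ \log C_E^{\downarrow}(n)+2\Phi(\ell)\log M-\Psi(\ell)\log(M_-M_+).
\]
Substituting $\Psi(\ell)=\Phi(\ell)+\delta(\ell)$ with $0\leq\delta(\ell)\leq B$ (Definition~\ref{main:def}(2)) and collecting the $\Phi(\ell)$-terms yields
\[
\log R(\ell)\ \geq\ \log C_E^{\downarrow}(n)+\Phi(\ell)\,\log Q_E(n)-\delta(\ell)\,\log(M_-M_+).
\]
Because $Q_E(n)>1$ gives $\log Q_E(n)>0$, while $\log(M_-M_+)$ is a fixed constant and $\delta(\ell)\leq B$, the middle term diverges (using $\Phi(\ell)\to\infty$ from Definition~\ref{main:def}(3)) and the last term stays bounded. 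Hence $\log R(\ell)\to+\infty$, so $R(\ell)>1$ for all large $\ell$, i.e. $\Delta_{E,\ell}(n)>0$.

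Part (2) is entirely symmetric, using the upper bound of Proposition~\ref{prop: estimations} instead. Taking logarithms and again writing $\Psi(\ell)=\Phi(\ell)+\delta(\ell)$ turns the estimate into
\[
\log R(\ell)\ \leq\ \log C_E^{\uparrow}(n)+\Phi(\ell)\,\log Q_E(n)+2\delta(\ell)\,\log M,
\]
where now $Q_E(n)<1$ forces $\log Q_E(n)<0$; the dominant term $\Phi(\ell)\log Q_E(n)\to-\infty$ while $2\delta(\ell)\log M\leq 2B\log M$ remains bounded, so $R(\ell)<1$ for all large $\ell$.

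The only place requiring care is that the error coming from the gap $\Psi-\Phi$ must not overwhelm the main term, and this is exactly what the hypotheses of Definition~\ref{main:def} are designed for: in the logarithm the gap contributes only the additive, $\ell$-uniformly bounded quantity $\delta(\ell)\log(M_-M_+)$ (respectively $2\delta(\ell)\log M$) via part~(2), whereas the signal $\Phi(\ell)\log Q_E(n)$ diverges because $\Phi(\ell)\to\infty$ by part~(3) and $\log Q_E(n)\neq 0$. Thus the sign of $\log Q_E(n)$, i.e. whether $Q_E(n)\gtrless 1$, decides everything, and the constants $C_E^{\downarrow}(n),C_E^{\uparrow}(n)$ are irrelevant in the limit.
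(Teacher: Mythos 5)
Your proof is correct and is essentially the paper's own argument: both feed the two-sided bound of Proposition~\ref{prop: estimations} into the ratio $p_{E,\ell}(n)^2/\bigl(p_{E,\ell}(n-1)\,p_{E,\ell}(n+1)\bigr)$, absorb the gap $\Psi(\ell)-\Phi(\ell)\leq B$ into an $\ell$-independent constant, and let $\Phi(\ell)\to\infty$ make $Q_E(n)^{\Phi(\ell)}$ dominate. The only differences are cosmetic --- you work additively with logarithms and write out the log-convex case explicitly, where the paper argues multiplicatively and dismisses case (2) by symmetry.
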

\begin{proof}
Since both of the cases are very similar, we only prove the criterion for the log-concavity property. Thus, let us require that $Q_E(n)>1$. Proposition \ref{prop: estimations} asserts that
\begin{align*}
\frac{p_{E,\ell }(n)^2}{p_{E,\ell }(n-1) \, p_{E,\ell }(n+1)}&\geq \frac{C_{E}^{\downarrow}(n) 
\, M_{E}(n)^{2 \Phi(\ell)}}{\left( M_{E}(n-1) \, M_{E}(n+1)\right)^{\Psi(\ell)}}\\
&\geq
\frac{C_{E}^{\downarrow}(n)}{\left( M_{E}(n-1) \, M_{E}(n+1)\right)^{B
}}\left(\frac{M_{E}(n)^{2 }}{M_{E}(n-1) \, M_{E}(n+1)}\right)^{\Phi(\ell)}\\
&=\frac{C_{E}^{\downarrow}(n)}{\left( M_{E}(n-1) \, M_{E}(n+1)\right)^{
B}}Q_E(n)^{\Phi(\ell)}\xrightarrow[\ell \to \infty]{} \infty,
\end{align*} 
where the last line is the consequence of the facts that $\Phi(\ell)\xrightarrow[\ell \to \infty]{} \infty$ and $Q_E(n)>1$. This completes the proof.
\end{proof}

\begin{remark}
There is a natural question arising from the statement of Theorem \ref{thm: basic}, namely, what can one say about the log-behavior of $p_{E,\ell }(n)$ if $Q_E(n)=1.$ This situation is more intricate, and we
saw in Example \ref{example13} that both phenomena (log-concavity and log-convexity) might occur in such a setting. 
\end{remark}

\begin{remark}
If there is a prime number $p$ such that $p\mid M_E(n)^2 $ and $p\nmid M_E(n-1)M_E(n+1)$ or conversely, then $p_{E,\ell }(n)$ is either strictly log-concave or strictly log-convex for all sufficiently large values of $\ell$.
\end{remark}

\section{Varying the
quotient $Q_{E}(n)$}

For various choices of the exception set $E$, we now apply Theorem \ref{thm: basic} to describe the log-behavior of $p_{E,\ell }(n)$ at $n$ for all sufficiently large values of $\ell$. Therefore, we need to determine the quotients
$Q_{E}(n)$. Actually, it turns out that their values only depend on the
exception set $E$ and the residue class of $n \pmod{r}$, where $r$ is
directly related to $E$.

\subsection{The case of $\mathbf{1,2,3 \not\in E} $}
\label{subsection S=(1,2,3)}

Here, Lemma \ref{Lemma: a_2=2} maintains that  
$r=3$---that is because the largest products heavily depend on the number
of $3$s appearing in their corresponding compositions. For $n\geq3$, it also guarantees that
\[
M_{E}\left( n\right)
=\left\{
\begin{array}{ll}
3^{n/3}, & n\equiv 0\pmod{3}, \\
4 \cdot 3^{(n-4)/3}, & n\equiv 1\pmod{3}, \\
2 \cdot 3^{(n-2)/3}, & n\equiv 2\pmod{3}.
\end{array}
\right.
\]
In consequence, this leads to
\begin{equation*}
Q_{E}\left( n\right) = \left\{
\begin{array}{ll}
\frac{9}{8}, & n\equiv 0\pmod{3}, \\
\frac{8}{9}, & n\equiv 1\pmod{3}, \text{ and} \\
1, & n\equiv 2\pmod{3}.
\end{array}
\right.
\end{equation*}
Therefore, Theorem \ref{thm: basic} asserts that $p_{E,\ell }(n)$ is log-concave if $n\equiv0\pmod{3}$ and log-convex if $n\equiv1\pmod{3}$ for all but finitely many numbers $\ell$.
This proves Theorem~\ref{th123}.
On the other hand, the case of $n\equiv 2 \pmod{3}$ needs some additional investigation.

\subsection{The case of $\mathbf{1,2,5 \not\in E}$ and $\mathbf{3\in E}$}
\label{subsection42}
In such a setting, Lemma \ref{Lemma: a_2=2} points out that $r=4$ and
\[
M_{E}\left( n\right)
=\left\{
\begin{array}{ll}
2^{n/2}, & n\equiv 0\pmod{4}, \\
5 \cdot 2^{(n-5)/2}, & n\equiv 1\pmod{4}, \\
2^{n/2}, & n\equiv 2\pmod{4}, \\
5 \cdot 2^{(n-5)/2}, & n\equiv 3\pmod{4},
\end{array}
\right.
\]
for any $n\geq4$. This gives us that 
\begin{equation*}
Q_{E}\left( n\right) =\left\{
\begin{array}{ll}
\frac{2^5}{5^2}, & n\equiv 0\pmod{4}, \\
\frac{5^2}{2^5}, & n\equiv 1\pmod{4}, \\
\frac{2^5}{5^2}, & n\equiv 2\pmod{4}, \text{ and}\\
\frac{5^2}{2^5}, & n\equiv 3\pmod{4}.
\end{array}
\right.
\end{equation*}
Hence, Theorem \ref{thm: basic} maintains that $p_{E,\ell }(n)$ is log-concave if $n\equiv0\pmod{2}$ and log-convex if $n\equiv1\pmod{2}$ for all but finitely many numbers $\ell$, and this is a complete characterization in that case.
This proves Theorem~\ref{th12} in
the case $5\in S$.

\subsection{The case of $\mathbf{1,2 \not\in E}$ and $\mathbf{3,5 \in E}$}
\label{subsection43}
Once again, Lemma \ref{Lemma: a_2=2} guarantees that $r=4$ and
\[
M_{E}\left( n\right) =\left\{
\begin{array}{ll}
2^{n/2}, & n\equiv 0\pmod{4}, \\
2^{(n-1)/2}, & n\equiv 1\pmod{4}, \\
2^{n/2}, & n\equiv 2\pmod{4}, \\
2^{(n-1)/2}, & n\equiv 3\pmod{4}
\end{array}
\right.
\]
for any non-negative integer $n$. This leads to 
\begin{equation*}
Q_{E}\left( n\right) =\left\{
\begin{array}{ll}
2, & n\equiv 0\pmod{4}, \\
\frac{1}{2}, & n\equiv 1\pmod{4}, \\
2, & n\equiv 2\pmod{4}, \text{ and} \\
\frac{1}{2}, & n\equiv 3\pmod{4}.
\end{array}
\right.
\end{equation*}
Therefore, we conclude that $p_{E,\ell }(n)$ is log-concave if $n\equiv0\pmod{2}$ and log-convex if $n\equiv1\pmod{2}$ for all but finitely many numbers $\ell$, and this is a complete characterization in that case.
This finishes the proof of
Theorem~\ref{th12} with the
case $5\in E$. 

\subsection{The case of $\mathbf{1,3,5 \not\in E}$ and $\mathbf{2,4\in E}$}
\label{subsection44}
In order to determine the maximal product $M_E(n)$ here, we use Lemma \ref{Lemma: a_2>2 a_j>2a_2}. It says that each number $u\geq6$ can not occur as a part of the compositions corresponding to $M_E(n)$ for any $n\geq3$. Moreover, the number $5$ may appear at most once---otherwise it is always better to replace $(5,5)$ by $(3,3,3,1)$. Thus, we put $r=3$, and obtain
\[
M_{E}\left( n\right) =\left\{
\begin{array}{ll}
3^{n/3}, & n\equiv 0\pmod{3}, \\
3^{(n-1)/3}, & n\equiv 1\pmod{3}, \\
5\cdot3^{(n-5)/3}, & n\equiv 2\pmod{3}
\end{array}
\right.
\]
for any $n\geq3$.
This further leads to 
\begin{equation*}
Q_{E}\left( n\right) =\left\{
\begin{array}{ll}
\frac{3^2}{5}, & n\equiv 0\pmod{3}, \\
\frac{3}{5}, & n\equiv 1\pmod{3}, \text{ and} \\
\frac{5^2}{3^3}, & n\equiv 2\pmod{3}.
\end{array}
\right.
\end{equation*}
In conclusion, $p_{E,\ell }(n)$ is log-concave if $n\equiv0\pmod{3}$ and log-convex if $n\equiv1,2\pmod{3}$ for all but finitely many numbers $\ell$, and this is a complete characterization in that case.
This finishes the proofs of
Theorem~\ref{th13}
and Theorem~\ref{th135} in the
case $5\in S$.

\subsection{The case of $\mathbf{1,3 \not\in E}$ and $\mathbf{2,4,5\in E}$}\label{subsection S=(1,3)}
Here, the direct application of Corollary \ref{Corollary: a_2>2 a_3>2a_2} gives us $r=3$ and
\begin{equation}
M_{E}\left( n\right) =\left\{
\begin{array}{ll}
3^{n/3}, & n\equiv 0\pmod{3}, \\
3^{(n-1)/3}, & n\equiv 1\pmod{3}, \\
3^{(n-2)/3}, & n\equiv 2\pmod{3}
\end{array}
\right.
\end{equation}
for every non-negative integer $n$.
This, in consequence, leads to 
\begin{equation*}
Q_{E}\left( n\right) =\left\{
\begin{array}{ll}
3, & n\equiv 0\pmod{3}, \\
1, & n\equiv 1\pmod{3},\text{ and} \\
\frac{1}{3}, & n\equiv 2\pmod{3}.
\end{array}
\right.
\end{equation*}
Again, Theorem \ref{thm: basic} asserts that $p_{E,\ell }(n)$ is log-concave if $n\equiv0\pmod{3}$ and log-convex if $n\equiv2\pmod{3}$ for all but finitely many numbers $\ell$.
This finishes the proof of
Theorem~\ref{th13} in the case
$5\in E$.
On the other hand, the case of $n\equiv 1 \pmod{3}$ needs some additional investigation.

\subsection{The case of $\mathbf{s\geq3}$, $\mathbf{1,s,s+1 \not\in E}$ and $\mathbf{2,3,\ldots,s-1 \in E}$}\label{subsection S=(1,r,r+1)}
Here, Lemma \ref{Lemma: a_3=a_2+1} maintains that $r=s$ and
\begin{equation*}
M_{E}\left( n\right)
=
\left( r+1\right) ^{j}r^{\frac{n-j(r+1)}{r}},\qquad n\equiv j\pmod{r}
\end{equation*}
for all $n>r(r-1)(3r-1)/2$ and $j\in\{0,1,\ldots,r-1\}$. Further, this leads to 
\begin{equation*}
Q_{E}\left( n\right) =\left\{
\begin{array}{ll}
\frac{r^{r+1}}{(r+1)^r}, & n\equiv 0\pmod{r}, \\
1, & n\equiv 1,\ldots ,r-2\pmod{r}, \text{ and} \\
\frac{(r+1)^{r}}{r^{r+1}}, & n\equiv r-1\pmod{r}.
\end{array}
\right.
\end{equation*}
Therefore, Theorem \ref{thm: basic} asserts that $p_{E,\ell }(n)$ is log-concave if $n\equiv0\pmod{r}$ and log-convex if $n\equiv -1\pmod{r}$ for all but finitely many numbers $\ell$.
This finishes the proof of
Theorem~\ref{th1rr+1} in the cases
$n\equiv 0,r-1\pmod{r}$.
On the other hand, the cases of $n\not\equiv 0,-1 \pmod{r}$ need some additional investigation.

\section{The problematic quotient case: $Q_{E}(n)=1$}\label{Section 5}

In the previous section, we got a few problematic cases with $Q_{E
}\left( n\right) =1$ (\ref{subsection S=(1,2,3)}, \ref{subsection S=(1,3)}, \ref{subsection S=(1,r,r+1)}). Since it is not clear what kind of log-behavior one should predict in these instances, we slightly extend the approach from Section~\ref{Preliminaries}
to obtain a
more general criterion than Theorem \ref{thm: basic}. In order to do that, an additional assumption throughout this section is needed, namely, we require that 
\begin{align}\label{key: estimate}
M_E(n)>M_E(n-1)
\end{align}
for all $n\geq n_0$, where $n_0$ is some positive integer.   

\begin{remark}
Let us notice that the inequality (\ref{key: estimate}) is satisfied in both
cases \ref{subsection S=(1,2,3)}
and \ref{subsection S=(1,r,r+1)}.
However, it is easy to notice that this condition does not hold in the
case \ref{subsection S=(1,3)}---there we have $M_E(n)=M_E(n-1)$ whenever
$n\geq4$ and $n\not\equiv0\pmod{3}$.
\end{remark}

We start by determining more sophisticated estimations for both the lower and the upper bounds of $p_{E,\ell }(n)$. For the upper bound, we have that

\begin{eqnarray*}
p_{E,\ell }(n) & =  & 
\sum _{k\leq n}\sum _{\substack{m_{1},\ldots ,m_{k} \geq 1 \\ m_{1}+\ldots +m_{k}=n}}
\frac{1}{k!} \, \frac{g_{E,\ell }\left( m_{1}\right) 
\cdots g_{E,\ell }\left( m_{k}\right) }{m_{1}\cdots m_{k}}\\
&\leq &
\sum _{k\leq n}\sum _{\substack{ m_1,\ldots,m_k\in S \\ m_{1}+\ldots +m_{k}=n  \\ m_1\cdots m_k=M_E}}
\frac{1}{k!} \, 
\frac{g_{E,\ell }\left( m_{1}\right) 
\cdots g_{E,\ell }\left( m_{k}\right) }{m_{1}\cdots m_{k}}\\
&
&{}+
\sum _{k\leq n}\sum _{\substack{m_1,\ldots,m_k\in S \\ m_{1}+\ldots +m_{k}=n \\ m_1\cdots m_k\not=M_E}}
\frac{1}{k!} \, 
\frac{\sigma_{E,1}\left( m_{1}\right) 
\cdots \sigma_{E,1}\left( m_{k}\right) }{m_{1}\cdots m_{k}}
\, \left(m_1 \cdots m_k\right)^{\Psi(\ell)}\\
&
&{}+
\sum _{k\leq n}\sum _{\substack{m_{1},\ldots ,m_{k} \geq 1 \\ m_{1}+\ldots +m_{k}=n \\ \exists 1\leq i\leq k: m_i\not\in S}}
\frac{1}{k!} \, 
\frac{\sigma_{E,1}\left( m_{1}\right) 
\cdots \sigma_{E,1}\left( m_{k}\right) }{m_1 \cdots m_k}
\, \left(m_{1,S} \cdots m_{k,S}\right)^{\Psi(\ell)}.
\end{eqnarray*}
Now, let us observe that the expression in the last line might be estimated from above by
\begin{align*}
&\sum _{k\leq n}\sum _{\substack{m_{1},\ldots ,m_{k} \geq 1 \\ m_{1}+\ldots +m_{k}=n \\ \exists 1\leq i\leq k: m_i\not\in S}}
\frac{1}{k!} \, 
\frac{\sigma_{E,1}\left( m_{1}\right) 
\cdots \sigma_{E,1}\left( m_{k}\right) }{m_1 \cdots m_k}
\, \left(\max_{1\leq l\leq n-1}\max_{\substack{ n_1,\ldots,n_l\in S \\ n_{1}+\ldots +n_{l}\leq n-1}}\prod_{i=1}^ln_i\right)^{\Psi(\ell)}\\
= &\sum _{k\leq n}\sum _{\substack{m_{1},\ldots ,m_{k} \geq 1 \\ m_{1}+\ldots +m_{k}=n \\ \exists 1\leq i\leq k: m_i\not\in S}}
\frac{1}{k!} \, 
\frac{\sigma_{E,1}\left( m_{1}\right) 
\cdots \sigma_{E,1}\left( m_{k}\right) }{m_1 \cdots m_k}
\, M_E(n-1)^{\Psi(\ell)}\\
\leq &\sum _{k\leq n}\sum _{\substack{m_{1},\ldots ,m_{k} \geq 1 \\ m_{1}+\ldots +m_{k}=n \\ \exists 1\leq i\leq k: m_i\not\in S}}
\frac{1}{k!} \, 
\frac{\sigma_{E,1}\left( m_{1}\right) 
\cdots \sigma_{E,1}\left( m_{k}\right) }{m_1 \cdots m_k}
\, \Tilde{M}_E(n)^{\Psi(\ell)}
\end{align*}
 where
 \begin{equation*}\label{max:Tilde{M_E}}
\Tilde{M}_{E}\left( n\right) :=\max _{k\leq n}
\max _{\substack{m_1,\ldots,m_k \in S \\ m_{1}+\ldots +m_{k}=n \\ m_1\cdots m_k\not=M_E}}m_{1}\cdots m_{k},
\end{equation*}
and the last inequality is a consequence of (\ref{key: estimate}) and the fact that we can always put $1$ as an additional part to any composition of $n-1$. Thus, we get that
\begin{eqnarray*}
p_{E,\ell }(n) &\leq &
\sum _{k\leq n}\sum _{\substack{ m_1,\ldots,m_k\in S \\ m_{1}+\ldots +m_{k}=n  \\ m_1\cdots m_k=M_E}}
\frac{1}{k!} \, 
\frac{g_{E,\ell }\left( m_{1}\right) 
\cdots g_{E,\ell }\left( m_{k}\right) }{m_{1}\cdots m_{k}}\\
&
&{}+
\sum _{k\leq n}\sum _{\substack{m_1,\ldots,m_k\in S \\ m_{1}+\ldots +m_{k}=n \\ m_1\cdots m_k\not=M_E}}
\frac{1}{k!} \, 
\frac{\sigma_{E,1}\left( m_{1}\right) 
\cdots \sigma_{E,1}\left( m_{k}\right) }{m_{1}\cdots m_{k}}
\, \left(m_1 \cdots m_k\right)^{\Psi(\ell)}\\
&
&{}+
\sum _{k\leq n}\sum _{\substack{m_{1},\ldots ,m_{k} \geq 1 \\ m_{1}+\ldots +m_{k}=n \\ \exists 1\leq i\leq k: m_i\not\in S}}
\frac{1}{k!} \, 
\frac{\sigma_{E,1}\left( m_{1}\right) 
\cdots \sigma_{E,1}\left( m_{k}\right) }{m_1 \cdots m_k}
\, \Tilde{M}_E(n)^{\Psi(\ell)}\\
&\leq &
\sum _{k\leq n}\sum _{\substack{ m_1,\ldots,m_k\in S \\ m_{1}+\ldots +m_{k}=n  \\ m_1\cdots m_k=M_E}}
\frac{1}{k!} \, 
\frac{g_{E,\ell }\left( m_{1}\right) 
\cdots g_{E,\ell }\left( m_{k}\right) }{m_{1}\cdots m_{k}}+p_{E,1}(n)\Tilde{M}_E(n)^{\Psi(\ell)}.
\end{eqnarray*}
The lower bound takes the form
\begin{eqnarray*}
p_{E,\ell }(n) & \geq  & \sum _{k\leq n}\sum _{\substack{ m_1,\ldots,m_k\in S \\ m_{1}+\ldots +m_{k}=n  \\ m_1\cdots m_k=M_E}}
\frac{1}{k!} \, 
\frac{g_{E,\ell }\left( m_{1}\right) 
\cdots g_{E,\ell }\left( m_{k}\right) }{m_{1}\cdots m_{k}}.
\end{eqnarray*}

If the maximal product $M_E(n)$ is
attained by a unique $S$-partition, say $(m_1,\ldots,m_k)$, then one can
rewrite the above estimations as follows: 
\begin{align}\label{def: A_E(n)}
A_E(n)\prod_{i=1}^k\frac{g_{E,\ell }(m_i)}{m_i}\leq p_{E,\ell }(n) \leq A_E(n)\prod_{i=1}^k\frac{g_{E,\ell }(m_i)}{m_i}+p_{E,1}(n)\Tilde{M}_E(n)^{\Psi(\ell)},
\end{align}
where $A_E(n)$ is the number of all compositions of $n$ with parts in $S$ for
which the maximal value $M_E(n)$ is attained divided by $k!$.

Now, we are ready to present an extension of Theorem \ref{thm: basic}.
\begin{theorem}\label{thm: involved}
Let $E$, $\Phi$, $\Psi$, and
$\{f_{\ell}\}_{\ell}$ be given. Suppose further that the inequality
$M_E(n)>M_E(n-1)$ holds for every $n\geq n_0$ and some positive integer
$n_0$. Let us also assume that $Q_{E}\left( n\right) =1$ for some
$n>n_0$, and  the maximal products $M_E(n), M_E(n-1)$,
and $M_E(n+1)$ are attained by the unique $S$-partitions $(x_1,\ldots,x_a)$,
$(y_1,\ldots,y_b)$, and $(z_1,\ldots,z_c)$ (respectively) such that
\begin{align*}
\frac{g_{\ell}(x_1)^2\cdots g_{\ell}(x_a)^2}{x_1^2\cdots x_a^2}\sim
\frac{g_{\ell}(y_1)\cdots g_{\ell}(y_b)g_{\ell}(z_1)\cdots g_{\ell}(z_c)}{y_1\cdots y_bz_1\cdots z_c}
\end{align*}
for large $\ell $.
Then $p_{E,\ell }(n)$ is strictly
\begin{enumerate}
\item  log-concave at $n$ for all sufficiently large values of $\ell$ if $\frac{A_E(n)^2}{A_E(n-1)A_E(n+1)}>1$,
\item  log-convex at $n$ for all sufficiently large values of $\ell$ if $\frac{A_E(n)^2}{A_E(n-1)A_E(n+1)}<1$,
\end{enumerate}
where the values $A_E(n), A_E(n-1)$, and $A_E(n+1)$ are defined as in  (\ref{def: A_E(n)}).
\end{theorem}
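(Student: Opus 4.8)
The plan is to rerun the argument behind Theorem~\ref{thm: basic}, but to retain the polynomial prefactors rather than stop at the leading exponential order, which here no longer decides the sign because $Q_E(n)=1$ makes it cancel. First I would invoke the two-sided estimate (\ref{def: A_E(n)}) simultaneously at the three points $n-1,n,n+1$; the hypothesis that each of $M_E(n-1),M_E(n),M_E(n+1)$ is attained by a \emph{unique} $S$-partition is exactly what makes (\ref{def: A_E(n)}) applicable verbatim there, and it makes the constants $A_E(n-1),A_E(n),A_E(n+1)$ well-defined and positive. Writing
\[
T(n):=A_E(n)\prod_{i=1}^a\frac{g_{E,\ell}(x_i)}{x_i},\quad T(n-1):=A_E(n-1)\prod_{j=1}^b\frac{g_{E,\ell}(y_j)}{y_j},\quad T(n+1):=A_E(n+1)\prod_{l=1}^c\frac{g_{E,\ell}(z_l)}{z_l},
\]
the estimate (\ref{def: A_E(n)}) reads $T(m)\le p_{E,\ell}(m)\le T(m)+p_{E,1}(m)\,\Tilde{M}_E(m)^{\Psi(\ell)}$ for each $m\in\{n-1,n,n+1\}$ and all large $\ell$, under the standing assumption (\ref{key: estimate}).

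The first real step is to show the error term is asymptotically negligible against $T(m)$. Using the left inequality of the fundamental bound (\ref{eq:fundamental}), $g_{E,\ell}(x_i)\ge x_i^{\Phi(\ell)+1}$, together with $\prod_i x_i=M_E(m)$, I obtain $T(m)\ge A_E(m)\,M_E(m)^{\Phi(\ell)}$, so the ratio of the error to the main term is bounded above by
\[
\frac{p_{E,1}(m)}{A_E(m)}\,\Tilde{M}_E(m)^{\Psi(\ell)-\Phi(\ell)}\left(\frac{\Tilde{M}_E(m)}{M_E(m)}\right)^{\Phi(\ell)}.
\]
Here the bounded-gap condition $\Psi(\ell)-\Phi(\ell)\le B$ keeps the middle factor bounded, while $\Tilde{M}_E(m)<M_E(m)$ and $\Phi(\ell)\to\infty$ drive the last factor to $0$; hence $p_{E,\ell}(m)=T(m)\bigl(1+o(1)\bigr)$ for each of $m=n-1,n,n+1$. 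The hypothesis $M_E(m)>M_E(m-1)$ enters precisely at this point, since appending a part $1$ to a maximal composition of $m-1$ shows $M_E(m-1)\le \Tilde{M}_E(m)<M_E(m)$, which is what guarantees the base $\Tilde{M}_E(m)$ of the error is strictly smaller than $M_E(m)$.

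Finally I would form the quotient and let the surviving factors interact. Factoring the main-term ratio gives
\[
\frac{T(n)^2}{T(n-1)\,T(n+1)}=\frac{A_E(n)^2}{A_E(n-1)\,A_E(n+1)}\cdot\frac{\bigl(\prod_i g_{E,\ell}(x_i)/x_i\bigr)^2}{\prod_j (g_{E,\ell}(y_j)/y_j)\,\prod_l (g_{E,\ell}(z_l)/z_l)},
\]
and the asymptotic hypothesis is exactly the statement that the second factor tends to $1$. Combining this with $p_{E,\ell}(m)=T(m)(1+o(1))$ yields
\[
\frac{p_{E,\ell}(n)^2}{p_{E,\ell}(n-1)\,p_{E,\ell}(n+1)}\xrightarrow[\ell\to\infty]{}\frac{A_E(n)^2}{A_E(n-1)\,A_E(n+1)}.
\]
If this limit exceeds $1$ the quotient is eventually $>1$, giving strict log-concavity; if it is below $1$ the quotient is eventually $<1$, giving strict log-convexity. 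I expect the main obstacle to be the uniform control of the error term: one must verify that $\Tilde{M}_E$ genuinely governs every non-maximal and non-$S$ contribution to $p_{E,\ell}$ and that the bounded-gap condition $\Psi-\Phi\le B$ truly prevents the $\Tilde{M}_E(m)^{\Psi(\ell)}$ terms from competing with $M_E(m)^{\Phi(\ell)}$. Once the $g$-products cancel by hypothesis, the remaining combinatorial ratio $A_E(n)^2/(A_E(n-1)A_E(n+1))$ settles the sign, which is the content of the theorem.
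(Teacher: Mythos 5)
Your proposal is correct and follows essentially the same route as the paper: both rest on the two-sided bound (\ref{def: A_E(n)}), kill the $\Tilde{M}_E$-error terms using $\Tilde{M}_E(m)<M_E(m)$ together with the bounded gap $\Psi(\ell)-\Phi(\ell)\leq B$ and $\Phi(\ell)\to\infty$, and then let the assumed asymptotic equivalence of the $g$-products reduce the quotient to $A_E(n)^2/\bigl(A_E(n-1)A_E(n+1)\bigr)$. The only (cosmetic) difference is organizational: you establish $p_{E,\ell}(m)\sim T(m)$ separately at $m=n-1,n,n+1$ and divide, whereas the paper expands $p_{E,\ell}(n-1)\,p_{E,\ell}(n+1)$ into four summands and compares each against the lower bound for $p_{E,\ell}(n)^2$.
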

\begin{proof}
Since both cases are very similar, let us only deal with the criterion for the log-concavity property. The assumptions from the statement together with the inequalities (\ref{def: A_E(n)}) assert that
\begin{equation*}
p_{E,\ell }(n)^2\geq A_E(n)^2\left(\prod_{i=1}^a\frac{g_{E,\ell }(x_i)}{x_i}\right)^2
\end{equation*}
and
\begin{align*}
p_{E,\ell }(n-1) \, p_{E,\ell }(n+1)\leq &\left(A_E(n-1)\prod_{i=1}^b\frac{g_{E,\ell }(y_i)}{y_i}+p_{E,1}(n-1)\Tilde{M}_E(n-1)^{\Psi(\ell)}\right)\\
&{}\cdot
\left(A_E(n+1)\prod_{i=1}^c\frac{g_{E,\ell }(z_i)}{z_i}+p_{E,1}(n+1)\Tilde{M}_E(n+1)^{\Psi(\ell)}\right)\\
<& A_E(n-1)A_E(n+1)\prod_{i=1}^b\frac{g_{E,\ell }(y_i)}{y_i}\prod_{i=1}^c\frac{g_{E,\ell }(z_i)}{z_i}\\
&{}+
A_E(n-1)p_{E,1}(n+1)M_E(n-1)^{\Psi(\ell)+1}\Tilde{M}_E(n+1)^{\Psi(\ell)}\\
&{}+
A_E(n+1)p_{E,1}(n-1)M_E(n+1)^{\Psi(\ell)+1}\Tilde{M}_E(n-1)^{\Psi(\ell)}\\
&{}+
p_{E,1}(n-1)p_{E,1}(n+1)\left(\Tilde{M}_E(n-1)\Tilde{M}_E(n+1)\right)^{\Psi(\ell)}.
\end{align*}
Next, we want to compare all of the above four summands with the estimations for $p_{\ell,S}(n)$.
In the first case, we obtain that
\begin{align*}
\frac{A_E(n-1)A_E(n+1)\prod_{i=1}^b\frac{g_{E,\ell }(y_i)}{y_i}\prod_{i=1}^c\frac{g_{E,\ell }(z_i)}{z_i}}{A_E(n)^2\left(\prod_{i=1}^a\frac{g_{E,\ell }(x_i)}{x_i}\right)^2}
\sim \frac{A_E(n-1)A_E(n+1)}{A_E(n)^2}<1
\end{align*}
for sufficiently large values of
$\ell $.
The second one, on the other hand, requires a little bit more computations:
\begin{align*}
&\frac{A_E(n-1)p_{E,1}(n+1)M_E(n-1)^{\Psi(\ell)+1}\Tilde{M}_E(n+1)^{\Psi(\ell)}}{A_E(n)^2\left(\prod_{i=1}^a\frac{g_{E,\ell }(x_i)}{x_i}\right)^2}\\
\leq
&2\frac{A_E(n-1)p_{E,1}(n+1)M_E(n-1)^{\Psi(\ell)+1}\Tilde{M}_E(n+1)^{\Psi(\ell)}}{A_E(n)^2\prod_{i=1}^b\frac{g_{E,\ell }(y_i)}{y_i}\prod_{i=1}^c\frac{g_{E,\ell }(z_i)}{z_i}}\\
\leq& 2\frac{A_E(n-1)p_{E,1}(n+1)M_E(n-1)^{\Psi(\ell)+1}\Tilde{M}_E(n+1)^{\Psi(\ell)}}{A_E(n)^2\left(M_E(n-1)M_E(n+1)\right)^{\Phi(\ell)}}\\
\leq
&2\frac{A_E(n-1)p_{E,1}(n+1)M_E(n-1)^{\Psi(\ell)+1}\Tilde{M}_E(n+1)^{\Psi(\ell)}}{A_E(n)^2\left(M_E(n-1)M_E(n+1)\right)^{-
B}\left(M_E(n-1)M_E(n+1)\right)^{\Psi(\ell)}}\\
=&C_1(n)\left(\frac{\Tilde{M}_E(n+1)}{M_E(n+1)}\right)^{\Psi(\ell)}\rightarrow 0
\end{align*}
for sufficiently large $\ell $.
The third case is similar to the second one and presents as follows:
\begin{align*}
&\frac{A_E(n+1)p_{E,1}(n-1)M_E(n+1)^{\Psi(\ell)+1}\Tilde{M}_E(n-1)^{\Psi(\ell)}}{A_E(n)^2\left(\prod_{i=1}^a\frac{g_{E,\ell }(x_i)}{x_i}\right)^2}\\
\leq
&2\frac{A_E(n+1)p_{E,1}(n-1)M_E(n+1)^{\Psi(\ell)+1}\Tilde{M}_E(n-1)^{\Psi(\ell)}}{A_E(n)^2\prod_{i=1}^b\frac{g_{E,\ell }(y_i)}{y_i}\prod_{i=1}^c\frac{g_{E,\ell }(z_i)}{z_i}}\\
\leq& 2\frac{A_E(n+1)p_{E,1}(n-1)M_E(n+1)^{\Psi(\ell)+1}\Tilde{M}_E(n-1)^{\Psi(\ell)}}{A_E(n)^2\left(M_E(n-1)M_E(n+1)\right)^{\Phi(\ell)}}\\
\leq
&2\frac{A_E(n+1)p_{E,1}(n-1)M_E(n+1)^{\Psi(\ell)+1}\Tilde{M}_E(n-1)^{\Psi(\ell)}}{A_E(n)^2\left(M_E(n-1)M_E(n+1)\right)^{-
B}\left(M_E(n-1)M_E(n+1)\right)^{\Psi(\ell)}}\\
=&C_2(n)\left(\frac{\Tilde{M}_E(n-1)}{M_E(n-1)}\right)^{\Psi(\ell)}\rightarrow 0
\end{align*}
for sufficiently large $\ell $.
The last instance can be estimated in the following way
\begin{align*}
&\frac{p_{E,1}(n-1)p_{E,1}(n+1)\left(\Tilde{M}_E(n-1)\Tilde{M}_E(n+1)\right)^{\Psi(\ell)}}{A_E(n)^2\left(\prod_{i=1}^a\frac{g_{E,\ell }(x_i)}{x_i}\right)^2}\\
\leq
&2\frac{p_{E,1}(n-1)p_{E,1}(n+1)\left(\Tilde{M}_E(n-1)\Tilde{M}_E(n+1)\right)^{\Psi(\ell)}}{A_E(n)^2\prod_{i=1}^b\frac{g_{E,\ell }(y_i)}{y_i}\prod_{i=1}^c\frac{g_{E,\ell }(z_i)}{z_i}}\\
\leq& 2\frac{p_{E,1}(n-1)p_{E,1}(n+1)\left(\Tilde{M}_E(n-1)\Tilde{M}_E(n+1)\right)^{\Psi(\ell)}}{A_E(n)^2\left(M_E(n-1)M_E(n+1)\right)^{\Phi(\ell)}}\\
\leq
&2\frac{p_{E,1}(n-1)p_{E,1}(n+1)\left(\Tilde{M}_E(n-1)\Tilde{M}_E(n+1)\right)^{\Psi(\ell)}}{A_E(n)^2\left(M_E(n-1)M_E(n+1)\right)^{-
B}\left(M_E(n-1)M_E(n+1)\right)^{\Psi(\ell)}}\\
=&C_3(n)\left(\frac{\Tilde{M}_E(n-1)\Tilde{M}_E(n+1)}{M_E(n-1)M_E(n+1)}\right)^{\Psi(\ell)}\rightarrow 0
\end{align*}
for sufficiently large values of
$\ell $.
All of the above calculations lead us finally to the conclusion that
\begin{align*}
\frac{p_{E,\ell }(n-1)p_{E,\ell }(n+1)}{p_{E,\ell }(n)^2}\rightarrow \frac{A_E(n-1)A_E(n+1)}{A_E(n)^2}<1,
\end{align*}
for sufficiently large values of
$\ell $
and this finishes the proof.
\end{proof}

\begin{remark}
Once again, the natural question arises from the statement of Theorem \ref{thm: involved}, namely, what can one say about the log-behavior of $p_{E,\ell }(n)$ if $\frac{A_E(n)^2}{A_E(n-1)A_E(n+1)}=1$. That case becomes even more difficult, and we saw
in Example~\ref{example13} that there is no
easy answer to that problem.
\end{remark}

Now, we apply Theorem \ref{thm: involved} to investigate some of the remaining cases from Section $4$. At this point, let us observe that we can not simply use our new criterion to deal with the instance \ref{subsection S=(1,2,3)}. Mainly because, we do not have the property with unique partition for large $n\equiv1\pmod{3}$ whenever $4\not\in E$---that directly follows from Lemma \ref{Lemma: a_2=2}. Therefore, we divide the case \ref{subsection S=(1,2,3)} into two sub-cases depending on belonging of the number $4$ to the exception set.

\subsection{The case of $\mathbf{1,2,3 \not\in E}$ and $\mathbf{4\in E}$}\label{Paragraph: 1,2,3 not in E 4 in E}
Here, Lemma \ref{Lemma: a_2=2} guarantees that the assumption from the statement of Theorem \ref{thm: involved} hold, and one can apply it for any $n>3$ with $n\equiv2\pmod{3}$. It follows that
\begin{align*}
\frac{A_E(n)^2}{A_E(n-1)A_E(n+1)}=\frac{\frac{1}{\left(\frac{n-2}{3}\right)!^2}}{\frac{1}{2!\left(\frac{n-5}{3}\right)!\left(\frac{n+1}{3}\right)!}}=\frac{2(n+1)}{n-2}>1.
\end{align*}
Hence, Theorem \ref{thm: involved} points out that $p_{E,\ell }(n)$ is log-concave for all but finitely many numbers $\ell$.
This finishes the proof of
Theorem~\ref{th1234} in the case
$4\in E$.

\subsection{The case of $\mathbf{1,2,3,4 \not\in E}$} In order to use Theorem \ref{thm: involved} to deal with that instance, we need to consider two additional cases depending on the relation between $g_\ell(2)$ and $g_\ell(4)$ for all sufficiently large values of $\ell$.

\subsubsection{}

Let us assume that there exists $\ell_0>0$ such that $2g_\ell(4)>\delta g_\ell(2)^{2}
$ for some fixed $\delta>1$ and every $\ell\geq\ell_0$. Moreover, let $n\equiv2\pmod{3}$ and $n>2(\delta+2)/(\delta-1)$. Here, all of the conditions from the statement of Theorem \ref{thm: involved} are satisfied except the uniqueness of the
$S$-partition for which $M_E(n-1)$ is attained (as Lemma \ref{Lemma: a_2=2} indicates). But it does not make a big difference. In fact, we just need to replace the value of $A_E(n-1)$ by the number $A_E^\star(n-1)$ that takes into account all of the compositions
with parts in $S$, for which $M_E(n-1)$ is attained. We have that
\begin{align*}
p_{E,\ell }(n-1) &\geq  \sum _{k\leq n-1}\sum _{\substack{ m_1,\ldots,m_k\in S \\ m_{1}+\ldots +m_{k}=n-1  \\ m_1\cdots m_k=M_E}}
\frac{1}{k!} \, 
\frac{g_{E,\ell }\left( m_{1}\right) 
\cdots g_{E,\ell }\left( m_{k}\right) }{m_{1}\cdots m_{k}}\\
&=\frac{1}{2!\left(\frac{n-5}{3}\right)!}\frac{g_\ell(2)^2}{2^2}\left(\frac{g_\ell(3)}{3}\right)^{\frac{n-5}{3}}+\frac{1}{\left(\frac{n-5}{3}\right)!}\frac{g_\ell(4)}{4}\left(\frac{g_\ell(3)}{3}\right)^{\frac{n-5}{3}}\\
&>\frac{1+\delta}{2!\left(\frac{n-5}{3}\right)!}\frac{g_\ell(2)^2}{2^2}\left(\frac{g_\ell(3)}{3}\right)^{\frac{n-5}{3}}=:A_E^\star(n-1)
\frac{\left(
g_{\ell }\left( 2\right)
\right) ^{2}}{2^{2}}\left(
\frac{g_{\ell }\left( 3\right)
}{3}\right) ^{\frac{n-5}{3}},
\end{align*}
where the last inequality holds for all $\ell\geq\ell_0$. Now, we can use
Theorem \ref{thm: involved} for the values $A_E(n)$, $A_E(n-1)^\star$,
and $A_E(n+1)$ and obtain that
\begin{align*}
\frac{A_E(n)^2}{A_E^\star(n-1)A_E(n+1)}\sim\frac{2}{1+\delta}\frac{n+1}{n-
2}<1,
\end{align*}
where the last inequality follows from our assumption that $n>2(\delta+2)/(\delta-1)$. In conclusion, we get that $p_{E,\ell }\left( n\right)
$ is log-convex for all but finitely many numbers $\ell$.

\subsubsection{}

Let us assume that $n>3$, $n\equiv2\pmod{3}$, and there exists $\ell_0>0$ such that $2g_\ell(4)<\delta
g_{\ell }\left( 2\right)^{2}
$ for some fixed $\delta<1$ and every $\ell\geq\ell_0$. Since the reasoning is analogous to the previous one, we omit it here and only notice that
\begin{align*}
\frac{A_E(n)^2}{A_E^\star(n-1)A_E(n+1)}\sim\frac{2}{1+\delta}\frac{n+1}{n-2}>1,
\end{align*}
which indicates that $p_{E,\ell }\left( n\right)
$ is log-concave for all but finitely many numbers $\ell$.

At this point, it is worth saying that our approach does not cover the cases investigated by Abdesselam, Heim, and Neuhauser \cite{AHN24, HN22}, because the appropriate parameters $\delta$ and $\ell_0$ do not exist there.

\subsection{The case of $\mathbf{s\geq3}$, $\mathbf{1,s,s+1 \not\in E}$ and $\mathbf{2,3,\ldots,s-1 \in E}$}\label{problematic subsection S=(1,r,r+1)}

Here, Lemma \ref{Lemma: a_3=a_2+1} maintains that one can directly apply Theorem \ref{thm: involved} for any  $n>r(r-1)(3r-1)/2+1$, $n\equiv j\pmod{r} $ with $j\in\{1,\ldots,r-2\}$, where $r=s$. We get that

\begin{align*}
\frac{\left( A_{E}\left( n\right) \right) ^{2}
}{A_E(n-1)A_E(n+1)}=\frac{j+1}{j}\frac{n-j(r+1)+r}{n-j(r+1)}>1,
\end{align*}
which means that $p_{E,\ell }(n)
$ is log-concave for all but finitely many numbers $\ell$.

At the end of this section, it is worth pointing out that the approach presented here can not be explicitly applied to deal with the last remaining case from Section \ref{subsection S=(1,3)}.

\section{Lemmata}

Here, we collect all auxiliary properties that are used throughout the manuscript.

\begin{lemma}\label{Lemma: a_2>2 a_j>2a_2}
Let $S=\{1,a_2,a_3,\ldots\}$ be such a set of positive integers that $1<a_2<a_3<\cdots$, $a_2\geq3$, $a_{j-1} < 2a_2$ and $a_j\geq 2a_2$ for some $j\geq3$. Then the largest value
\begin{align*}
\max_{1\leq k\leq n}\max_{\substack{ m_1,\ldots,m_k\in S \\ m_{1}+\ldots +m_{k}=n}}\prod_{i=1}^km_i
\end{align*}
can not be attained at the partition $(m_1,\ldots,m_k)$ if there exists $i\in\{1,2,\ldots,k\}$ such that $m_i=a_l$ for some $l\geq j$.
\end{lemma}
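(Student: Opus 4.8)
The plan is to prove this by a replacement (exchange) argument: I would assume, toward a contradiction, that the maximal product is attained at a composition $(m_1,\ldots,m_k)$ containing a part $m_i=a_l$ with $l\geq j$, and then exhibit a different $S$-composition of $n$ with strictly larger product. The key numerical fact I would exploit is the defining hypothesis $a_l\geq a_j\geq 2a_2$. The idea is that a single large part $a_l$ can be broken up into smaller parts drawn from $S$ whose sum equals $a_l$ and whose product strictly exceeds $a_l$, which immediately contradicts maximality since the rest of the composition is untouched.

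First I would reduce to the part $m_i=a_l$ and ask whether $a_l$ admits an $S$-composition of itself into at least two parts with product $>a_l$. The cleanest candidate is to write $a_l = a_2 + (a_l - a_2)$, provided both $a_2$ and $a_l-a_2$ lie in $S$, or more robustly to use copies of $a_2$ together with $1$'s. Because $a_l\geq 2a_2$, I can peel off at least one summand equal to $a_2$ and still have a nonnegative remainder; concretely, writing $a_l = q\,a_2 + s$ with $q\geq 2$ and $0\leq s<a_2$, the replacement of the single part $a_l$ by $q$ copies of $a_2$ and $s$ copies of $1$ (all of which lie in $S$, since $1,a_2\in S$) gives product $a_2^{\,q}\cdot 1^{s}=a_2^{\,q}$ against the original $a_l=q a_2+s$. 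Since $a_2\geq 3$ and $q\geq 2$, one checks $a_2^{\,q}\geq a_2^2\geq 3 a_2 > 2a_2 + s \geq a_l$ when $s<a_2\leq a_2^2-2a_2$, so the product strictly increases. The total sum is preserved, so we obtain a valid $S$-composition of $n$ with a strictly larger product, contradicting that the original composition was maximal.

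The main obstacle I anticipate is handling the boundary arithmetic cleanly, i.e.\ verifying $a_2^{\,q}>a_l$ in the tightest regime. The delicate case is $q=2$ with $a_l$ close to $2a_2$ (say $a_l=2a_2$ exactly, giving $s=0$): there the comparison is $a_2^2$ versus $2a_2$, and $a_2^2>2a_2\iff a_2>2$, which holds precisely because of the hypothesis $a_2\geq 3$. This is exactly where the assumption $a_2\geq 3$ (rather than $a_2\geq 2$) is essential, and I would flag that the lemma would fail for $a_2=2$. I would also need to confirm that $s$ copies of $1$ are harmless, which is immediate since $1\in S$ and multiplying by $1$ does not decrease the product while it does absorb the remainder into the sum.

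A secondary point to address is that the replacement must stay inside $S$ and inside the range $k\leq n$; both are automatic, since I only introduce parts equal to $a_2$ and $1$, both in $S$, and increasing the number of parts keeps $k\leq n$ because every part is a positive integer summing to $n$. I would present the argument as: fix the offending part, perform the $a_2$-and-$1$ decomposition, compare products via the inequality $a_2^{\lfloor a_l/a_2\rfloor}>a_l$ (which reduces to $a_2\geq 3$ and $\lfloor a_l/a_2\rfloor\geq 2$), and conclude that maximality is violated. This contradiction shows no maximizing composition can contain a part $a_l$ with $l\geq j$, which is exactly the claim.
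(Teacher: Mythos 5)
Your proposal is correct and is essentially the paper's own proof: the paper likewise writes $a_l = u a_2 + v$ with $u \geq 2$ and $0 \leq v < a_2$, replaces the single part $a_l$ by $u$ copies of $a_2$ and $v$ ones, and reduces everything to the elementary inequality $(u+1)a_2 \leq a_2^{u}$. One small caution: your displayed chain $a_2^{q} \geq a_2^{2} \geq 3a_2 > 2a_2 + s \geq a_l$ is only literally valid when $q=2$ (for $q\geq 3$ the last step fails since $a_l = qa_2+s > 2a_2+s$); the general case follows instead from $a_2^{q-1} \geq 3^{q-1} \geq q+1$, giving $a_2^{q} \geq (q+1)a_2 > qa_2 + s = a_l$, which is precisely the ``elementary exercise'' the paper leaves to the reader.
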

\begin{proof}
Let us suppose that $a_l$ occurs as a part of $(m_1,\ldots,m_k)$. Thus, we
may write $a_l=ua_2+v$ for some non-negative integers $u\geq2$ and
$v\in\{0,1,\ldots,a_2-1\}$. In particular, it follows that one can replace
the part $a_l$ by $u$ $a_2$s and $v$ ones. After such a switch it suffices to show that 
\begin{align*}
(u+1)a_2\leq a_2^u,
\end{align*}
and that is an elementary exercise. This completes the proof.
\end{proof}

As a direct consequence of the preceeding lemma, we get the following property.

\begin{corollary}\label{Corollary: a_2>2 a_3>2a_2}
Let $S=\{1,a_2,a_3,\ldots\}$ be such a set of positive integers that $1<a_2<a_3<\cdots$, $a_2\geq3$ and $2a_2\leq a_3$. Then we have that
\begin{equation*}
\max_{1\leq k\leq n}\max_{\substack{ m_1,\ldots,m_k\in S \\ m_{1}+\ldots +m_{k}=n}}\prod_{i=1}^km_i=a_2^{\floor{\frac{n}{a_2}}},
\end{equation*}
which is attained at the partition
\begin{equation*}
(a_2^{\floor{\frac{n}{a_2}}},1^{n-a_2\floor{\frac{n}{a_2}}}).
\end{equation*}
\end{corollary}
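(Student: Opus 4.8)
The plan is to derive Corollary~\ref{Corollary: a_2>2 a_3>2a_2} as an immediate specialization of Lemma~\ref{Lemma: a_2>2 a_j>2a_2}. Under the hypotheses $a_2 \geq 3$ and $2a_2 \leq a_3$, every element $a_l$ with $l \geq 3$ satisfies $a_l \geq a_3 \geq 2a_2$, so the lemma (applied with $j = 3$) tells us that no optimal partition can contain any part $a_l$ with $l \geq 3$. Hence the only parts available to an optimizing partition are $1$ and $a_2$, and the problem reduces to maximizing $\prod m_i$ over compositions of $n$ into $1$s and $a_2$s.

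First I would reduce the maximization to the two-part alphabet $\{1, a_2\}$ as just described. Then, writing a candidate partition as $(a_2^{s}, 1^{\,n - s a_2})$ with $0 \leq s \leq \lfloor n/a_2 \rfloor$, the product equals $a_2^{s}$, which is strictly increasing in $s$. Therefore the maximum over $s$ is attained at the largest admissible value $s = \lfloor n/a_2\rfloor$, giving the value $a_2^{\lfloor n/a_2\rfloor}$ and the partition
\begin{equation*}
\left(a_2^{\lfloor n/a_2\rfloor}, 1^{\,n - a_2\lfloor n/a_2\rfloor}\right),
\end{equation*}
exactly as claimed. The ones that pad out the remainder $n - a_2\lfloor n/a_2\rfloor$ contribute a factor of $1$ and so do not affect the product.

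There is essentially no main obstacle here, since the heavy lifting has already been done in Lemma~\ref{Lemma: a_2>2 a_j>2a_2}: that lemma eliminates every part larger than $a_2$ from consideration, and what remains is the elementary observation that packing in as many copies of $a_2$ as possible beats using ones. The only point that warrants a line of care is checking that replacing a part $a_2$ by $a_2$ ones never helps, i.e.\ that $a_2 \geq 1$ strictly so that $a_2 > 1^{a_2}=1$; this is guaranteed by $a_2 \geq 3$. Thus the corollary follows directly, and I would phrase the proof as a two-sentence invocation of the lemma followed by the monotonicity remark.
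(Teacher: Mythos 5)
Your proposal is correct and follows exactly the route the paper intends: the paper states the corollary as a direct consequence of Lemma~\ref{Lemma: a_2>2 a_j>2a_2} (applied with $j=3$, which is legitimate since $a_2<2a_2\leq a_3$), and your reduction to the alphabet $\{1,a_2\}$ followed by the monotonicity of $a_2^{s}$ in $s$ is precisely the omitted elementary step. Nothing is missing.
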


Despite the fact that Corollary \ref{Corollary: a_2>2 a_3>2a_2} might be
applied for various sets $S$, we still do not have any information about the
considered maximum value, when $
a_2<a_3<2a_2$ or $a_2=2$. Let us investigate these cases separately. At first, we assume that $a_2=2$. In such a setting, one can deduce the following.

\begin{lemma}\label{Lemma: a_2=2}
Let $S=\{1,2,a_3,\ldots\}$ be such a set of positive integers that $2<a_3<a_4<\cdots$. For a natural number $n$, we have the following description of
\begin{align*}
\max_{1\leq k\leq n}\max_{\substack{ m_1,\ldots,m_k\in S \\ m_{1}+\ldots +m_{k}=n}}\prod_{i=1}^km_i.
\end{align*}
\begin{enumerate}

\item If $a_3=3$ and $4\not\in S$, then it is attained at
\begin{align*}
(1), &\text{ if } n=1,\\
(3^\frac{n}{3}), &\text{ if } n\equiv0\pmod{3},\\
 (3^\frac{n-4}{3},2,2), &\text{ if } n\geq4 \text{ and } n\equiv1\pmod{3},\\
 (3^\frac{n-2}{3},2), &\text{ if }n\equiv2\pmod{3}.
\end{align*}
Similarly, if $a_3=3$ and $a_4=4$, then the maximum value is attained at 
\begin{align*}
(1), &\text{ if } n=1,\\
(3^\frac{n}{3}), &\text{ if } n\equiv0\pmod{3},\\
 (3^\frac{n-4}{3},2,2),\left( 4,3^\frac{n-4}{3}\right)
, &\text{ if } n\geq4 \text{ and } n\equiv1\pmod{3},\\
 (3^\frac{n-2}{3},2), &\text{ if }n\equiv2\pmod{3}.
\end{align*}

\item If $a_3=4$ and $5\not\in S$, then it is attained at
\begin{align*}
(2^{\frac{n}{2}}),(4,2^{\frac{n-4}{2}}),\ldots,(4^{\frac{n}{4}}), &\text{ if } n\equiv0\pmod{4},\\
(2^{\frac{n-1}{2}},1),(4,2^{\frac{n-5}{2}},1),\ldots,(4^{\frac{n-1}{4}},1), &\text{ if } n\equiv1\pmod{4},\\
(2^{\frac{n}{2}}),(4,2^{\frac{n-4}{2}}),\ldots,(4^{\frac{n-2}{4}},2), &\text{ if } n\equiv2\pmod{4},\\
(2^{\frac{n-1}{2}},1),(4,2^{\frac{n-5}{2}},1),\ldots,(4^{\frac{n-3}{4}},2,1), &\text{ if } n\equiv3\pmod{4}.
\end{align*} 
On the other hand, if $a_3=4$ and $a_4=5$, then the relevant partitions take the forms $(1), (2)$ and
$(2,1)$ for $n\leq3$, and
\begin{align*}
(2^{\frac{n}{2}}),(4,2^{\frac{n-4}{2}}),\ldots,(4^{\frac{n}{4}}), &\text{ if } n\equiv0\pmod{4},\\
(5,2^{\frac{n-5}{2}}),(5,4,2^{\frac{n-9}{2}}),\ldots,(5,4^{\frac{n-5}{4}}), &\text{ if } n\equiv1\pmod{4},\\
(2^{\frac{n}{2}}),(4,2^{\frac{n-4}{2}}),\ldots,(4^{\frac{n-2}{4}},2), &\text{ if } n\equiv2\pmod{4},\\
(5,2^{\frac{n-5}{2}}),(5,4,2^{\frac{n-9}{2}}),\ldots,(5,4^{\frac{n-7}{4}},2), &\text{ if } n\equiv3\pmod{4},
\end{align*}
for $n\geq4$.

\item If $a_3
=5$, then it is attained at 
\begin{align*}
(1), &\text{ if } n=1,\\
(2,1), &\text{ if } n=3,\\
(5,2^{\frac{n-5}{2}}), &\text{ if } n\geq 5\text{ and }n\equiv1\pmod{2},\\
 (2^{\frac{n}{2}}), &\text{ if } n\equiv0\pmod{2}.
\end{align*}

\item If $a_3\geq 6$, then it is attained at
\begin{align*}
(2^{\floor{\frac{n}{2}}},1^{n-2\floor{\frac{n}{2}}}).
\end{align*}
\end{enumerate}
\end{lemma}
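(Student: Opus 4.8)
The plan is to prove all four cases by a single family of local exchange (replacement) arguments. Starting from any composition $(m_1,\ldots,m_k)$ with parts in $S$, sum $n$, and maximal product, I repeatedly replace a block of parts by another block with the same sum but strictly larger (or, at a boundary, equal) product, until the composition is forced into the canonical shape claimed. Because $1,2\in S$ throughout, every replacement target I use lies in $S$, so each step stays admissible. The whole argument is driven by the elementary inequalities $2^q>2q$ for $q\geq3$, $\,2^3<3^2$, $\,5^2<2^5$, $\,5<3\cdot2$, together with the identity $4=2^2$, plus one comparison that decides, by the parity of $n$, whether a leftover unit is absorbed as a $1$, a $2$, or a $5$.

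I would first carry out a reduction uniform in all four cases: no part $m\geq 6$ can occur in a maximizing composition. Writing $m=2q$ or $m=2q+1$ with $q\geq 3$ and replacing $m$ by $q$ copies of $2$ (together with one $1$ in the odd case) keeps the sum fixed while changing the product from $m$ to $2^q$, and $2^q>m$ for $m\geq 6$ makes this a strict improvement. After this step every part lies in $\{1,2\}\cup(\{3,4,5\}\cap S)$, and the four cases of the Lemma are exactly the possibilities for $\{3,4,5\}\cap S$: namely $\{3\}$ or $\{3,4\}$ (Case 1), $\{4\}$ or $\{4,5\}$ (Case 2), $\{5\}$ (Case 3), and $\emptyset$ (Case 4).

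Next I would bound the multiplicities of the surviving small parts by further exchanges tuned to which of $3,4,5$ are present. When $3\in S$ (Case 1): replace $(5)$ by $(3,2)$ since $5<6$, replace three $2$s by two $3$s since $2^3<3^2$, and eliminate units via $1\cdot2<3$, $1\cdot1<2$, and $(1,3)\to(2,2)$; this forces at most two $2$s, no units once $n\geq 2$, and a canonical form in $3$s and $2$s governed by $n\bmod 3$. When $5\in S$ but $3\notin S$ (Case 2 with $a_4=5$, and Case 3): replace $(5,5)$ by five $2$s using $5^2<2^5$, forcing at most one $5$. Whenever $4\in S$ I record that $4=2\cdot2$, so a $4$ and a pair of $2$s are interchangeable with identical sum and product; this is the source of the non-uniqueness in Case 1 (with $a_4=4$) and Case 2, where every regrouping of the $2$s into $4$s is equally optimal.

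Finally, with the admissible parts and their multiplicities pinned down, the optimizer is determined by $n$ modulo $2$ or $3$ (according to whether $3\in S$), plus one parity comparison whenever $5\in S$: for odd $n\geq5$ the estimate $5\cdot2^{(n-5)/2}=\tfrac54\,2^{(n-1)/2}>2^{(n-1)/2}$ shows a single $5$ beats an extra unit, while for even $n$ the estimate $5\cdot2^{(n-6)/2}=\tfrac58\,2^{n/2}<2^{n/2}$ shows no $5$ should appear; the analogous comparison selects whether the residue is carried by a single $2$, by $(2,2)$, or by $(3,2)$ in the other cases. I would then check the finitely many small values ($n=1$, and $n=3$ where a $5$ does not fit) directly to match the tabulated partitions. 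I expect the main obstacle to be bookkeeping rather than depth: one must verify the boundary comparisons near $m\in\{5,6,7\}$ carefully, since these are precisely where ``split into $2$s'' switches from a strict loss to a strict gain, and, in every case with $4\in S$ or $5\in S$, enumerate \emph{all} optimizers rather than one, because the identity $4=2^2$ and the parity trade-off with $5$ genuinely produce several compositions of equal maximal product.
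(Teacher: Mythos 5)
Your proposal is correct and follows essentially the same route as the paper's own proof: local exchange arguments based on splitting any part $m\geq 6$ into $2$s (and possibly one $1$), the inequalities $2^3<3^2$, $5^2<2^5$, $5<3\cdot 2$, the product-preserving interchange $4=2\cdot 2$, and the parity trade-off between a single $5$ and a leftover $1$ (your comparisons $5\cdot 2^{(n-5)/2}>2^{(n-1)/2}$ and $5\cdot 2^{(n-6)/2}<2^{n/2}$ are exactly the paper's replacements $(2,2,1),(4,1)\to(5)$ and $(5,1)\to(2,2,2)$). The only difference is thoroughness, not method: the paper proves case (2) in detail, treats case (4) as an analogue of Lemma \ref{Lemma: a_2>2 a_j>2a_2}, and leaves (1) and (3) as exercises, whereas you carry out the same exchanges uniformly in all cases (and correctly note that $5\in S$ is possible within case (1), where $(5)\to(3,2)$ disposes of it).
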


\begin{proof}
The proof of the point $(4)$ is an analogue
of the proof of Lemma \ref{Lemma: a_2>2 a_j>2a_2}. Therefore, we omit it here. On the other hand, the remaining cases are similar to each other. Hence, we show $(2)$ and leave the rest as an exercise to the reader.

At first, let us suppose that $5\notin S$. Since we can always replace $4$ by
two $2$s and vice versa, $1$ can not occur more than once as a part, and we know that the point $(4)$ is true, it follows that the desired maximum is attained at the appropriate partitions.

On the other hand, if $a_4=5$, then the number $5$ can occur at most once as
a part. Otherwise, it is better to replace two $5$s by, for example, five
$2$s ($5^2<2^5$). Additionally, it is easy to see that $1$ can not appear as a part if $n\geq5$. Indeed, we may just simply replace $(2,2,1)$ or $(4,1)$ by $(5)$, or $(5,1)$ by $(2,2,2)$. This ends the proof.
\end{proof}

\begin{lemma}\label{Lemma: a_2-1 is the max of occurrences}  
Let $S=\{1,a_2,a_3,\ldots\}$ be such a set of positive integers that $1<a_2<a_3<\cdots$, $a_2\geq3$ and $a_{j} < 2a_2$ and $a_{j+1}\geq2a_2$ for some $j\geq3$. For a natural number $n$, the value of
\begin{align*}
\max_{1\leq k\leq n}\max_{\substack{ m_1,\ldots,m_k\in S \\ m_{1}+\ldots +m_{k}=n}}\prod_{i=1}^km_i.
\end{align*}
can not be attained at the partition $(m_1,\ldots,m_k)$ if there exists $a_i$ for $i\in\{1,3,4,\ldots,j\}$ which occurs more than $a_2-1$ times as a part in $(m_1,\ldots,m_k)$.
\end{lemma}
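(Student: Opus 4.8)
The plan is to prove the contrapositive by a single length-preserving exchange. I would take a partition $(m_1,\ldots,m_k)$ of $n$ with parts in $S$ attaining the maximal product, and assume toward a contradiction that some part $a_i$ with $i\in\{1,3,4,\ldots,j\}$ occurs at least $a_2$ times. The move I would make is to replace $a_2$ copies of $a_i$ by $a_i$ copies of $a_2$. This keeps the total sum fixed, since $a_2\cdot a_i=a_i\cdot a_2$, and leaves every part inside $S$. The product of the affected block changes from $a_i^{a_2}$ to $a_2^{a_i}$, so the whole argument reduces to the strict inequality $a_i^{a_2}<a_2^{a_i}$, which would contradict maximality.

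Next I would dispatch this inequality in two cases. For $i=1$ it is immediate: since $a_1=1$ we have $1^{a_2}=1<a_2=a_2^{1}$, so $a_2$ ones can always be fused into a single $a_2$ with a strictly larger product. For $i\geq 3$ I would use that $a_i>a_2\geq 3>e$, so that both numbers lie on the decreasing branch of the map $x\mapsto(\ln x)/x$; hence $(\ln a_i)/a_i<(\ln a_2)/a_2$, and clearing denominators and exponentiating gives $a_i^{a_2}<a_2^{a_i}$, exactly as needed.

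The only step that is not pure bookkeeping is this last inequality in the range $a_2<a_i<2a_2$, and that is precisely where the hypothesis $a_2\geq 3$ enters: it forces both exponents past $e$ so that the monotonicity of $(\ln x)/x$ applies. I do not expect a genuine obstacle here, since the swap preserves the total mass and never leaves $S$; and although it increases the number of parts (because $a_i>a_2$), the new length still satisfies $1\leq k'\leq n$ automatically, as every part is at least $1$. I would close by concluding that a maximizing partition must therefore avoid $a_2$ or more copies of any such $a_i$, which is the assertion of the lemma.
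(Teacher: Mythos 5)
Your proof is correct and follows essentially the same route as the paper's: the same exchange of $a_2$ copies of $a_i$ for $a_i$ copies of $a_2$, the same reduction to the strict inequality $a_i^{a_2}<a_2^{a_i}$, and the same appeal to the monotonicity of $\ln x/x$ for $x>e$ (the paper disposes of the case $i=1$ with a one-line remark, just as you do). Your extra bookkeeping about the part count staying at most $n$ is a minor refinement, not a different argument.
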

\begin{proof}
Clearly, $1$ can not appear as a part more than $a_2-1$ times. Let us suppose
that there is some $a_i$ for $i\in\{3,4,\ldots,j\}$ with
multiplicity
$a_2$ in the partition. Thus, we may replace $a_2$ $a_i$s by $a_i$ $a_2$s and examine whether the inequality
\begin{align*}
a_i^{a_2}<a_2^{a_i}
\end{align*}
is true. For the sake of convenience, let us set $u:=a_2$ and $v:=a_i$, and notice that $u<v$. Hence, we may rewrite the above inequality as follows
\begin{align*}
\frac{\ln{v}}{v}<\frac{\ln{u}}{u}.
\end{align*} 
Finally, let us observe that the function $\ln{x}/x$ is decreasing for $x>e$. This ends the proof.
\end{proof}

\begin{lemma}\label{Lemma: a_3=a_2+1}
Let $S=\{1,a_2,a_3,\ldots\}$ be such a set of positive integers that $1<a_2<\cdots$, $a_2\geq3$, $a_3=a_2+1$, $a_{j}<2a_2$ and $a_{j+1}\geq2a_2$ for some $j\geq3$. Suppose further that $n\geq a_2(a_2-1)(3a_2-1)/2$. Then the value of 
\begin{align*}
\max_{1\leq k\leq n}\max_{\substack{ m_1,\ldots,m_k\in S \\ m_{1}+\ldots +m_{k}=n}}\prod_{i=1}^km_i.
\end{align*}
is attained at
\begin{align*}
(a_3^{i},a_2^{\frac{n-i\cdot a_3}{a_2}}), &\text{ if } n\equiv i\pmod{a_2}
\end{align*}
for every $i\in\{0,1,\ldots,a_2-1\}$.
\end{lemma}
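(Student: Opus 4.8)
The plan is to convert the multiplicative optimization into an additive one by measuring each part against the ``base'' part $a_2$. First I would invoke Lemma~\ref{Lemma: a_2>2 a_j>2a_2} (applied with the present indexing, so that $a_{j+1}$ is the first part $\geq 2a_2$) to discard every part $\geq 2a_2$; hence a maximizing partition uses only parts from $\{1,a_2,a_3,\ldots,a_j\}$, all of which are $<2a_2$. For such a part $p$ set $\phi(p):=\ln p-\tfrac{p}{a_2}\ln a_2$. Then for every partition $(m_1,\ldots,m_k)$ of $n$ one has the identity
\begin{equation*}
\ln\prod_{i=1}^k m_i=\frac{n\ln a_2}{a_2}+\sum_{i=1}^k\phi(m_i),
\end{equation*}
because $\sum_i \tfrac{m_i}{a_2}\ln a_2=\tfrac{n}{a_2}\ln a_2$. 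Thus maximizing the product is \emph{equivalent} to maximizing $\sum_i\phi(m_i)$, and the first summand, depending only on $n$, plays no role.

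Next I would analyze the sign of $\phi$. Since $a_2\geq3>e$, the function $x\mapsto\tfrac{\ln x}{x}$ is strictly decreasing on $[a_2,\infty)$, which gives $\phi(a_2)=0$ and $\phi(p)<0$ for every admissible $p>a_2$; for $p=1$ directly $\phi(1)=-\tfrac{\ln a_2}{a_2}<0$. Writing each part $a_l$ ($3\le l\le j$) as $a_2+d_l$ with residue $d_l=a_l-a_2\in\{1,\ldots,a_2-1\}$ and noting $d_3=1$, a short computation gives $|\phi(a_l)|=\tfrac{d_l}{a_2}\ln a_2-\ln(1+\tfrac{d_l}{a_2})$ and $|\phi(a_3)|=\tfrac{1}{a_2}\ln a_2-\ln(1+\tfrac1{a_2})$, so that
\begin{equation*}
|\phi(a_l)|-d_l\,|\phi(a_3)|=d_l\ln\!\left(1+\tfrac1{a_2}\right)-\ln\!\left(1+\tfrac{d_l}{a_2}\right)\geq0,
\end{equation*}
by Bernoulli's inequality $(1+\tfrac1{a_2})^{d_l}\geq1+\tfrac{d_l}{a_2}$, with strict inequality when $d_l\geq2$. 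For the part $1$ one has $|\phi(1)|-|\phi(a_3)|=\ln(1+\tfrac1{a_2})>0$. In words: $a_3=a_2+1$ is, per unit of residue it carries, the uniquely cheapest way to deviate from the base $a_2$.

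I would then phrase the whole problem as paying a ``penalty'' $-\sum_i\phi(m_i)\geq0$. Splitting a partition into its $a_2$-parts (penalty $0$) and the remaining parts $T\subseteq\{1,a_3,\ldots,a_j\}$, the residues of the parts of $T$ add up to some integer $R\geq0$ with $R\equiv n\equiv i\pmod{a_2}$; summing the per-part bounds above gives total penalty $\geq R\,|\phi(a_3)|$. Since $0\le i\le a_2-1$, the smallest non-negative $R$ with $R\equiv i$ is $R=i$, whence the penalty is $\geq i\,|\phi(a_3)|$, and the displayed strictness forces equality to occur only when $T$ consists of exactly $i$ copies of $a_3$ and nothing else. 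This identifies the (unique) maximizer as $(a_3^{\,i},a_2^{(n-i\,a_3)/a_2})$; the hypothesis $n\geq a_2(a_2-1)(3a_2-1)/2$ (far more than the $i\,a_3\le(a_2-1)(a_2+1)$ actually needed) guarantees the exponent $(n-i\,a_3)/a_2$ is a non-negative integer, so this partition exists. The case $i=0$ is the degenerate one where $T=\varnothing$ and the maximum is $a_2^{n/a_2}$.

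The main obstacle is not any single estimate but organizing the reduction so that \emph{all} competing partitions are covered simultaneously: the efficiency identity strips away the dependence on $n$, Lemma~\ref{Lemma: a_2>2 a_j>2a_2} removes the large parts, and the single Bernoulli inequality at once rules out the part $1$, the ``wrap-around'' options (captured by $R\ge i$ rather than $R=i$ being imposed a priori), and every part $a_l$ with $a_l>a_2+1$. The only genuinely delicate point is tracking the equality cases to obtain uniqueness of the maximizing partition, which is exactly what the strict Bernoulli inequality for $d_l\geq2$ and the strict bound $|\phi(1)|>|\phi(a_3)|$ supply.
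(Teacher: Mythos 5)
Your proof is correct, but it takes a genuinely different route from the paper's. Both arguments begin identically, invoking Lemma~\ref{Lemma: a_2>2 a_j>2a_2} (with the shifted index) to exclude all parts $\geq 2a_2$. From there the paper proceeds by local exchange arguments: it first needs Lemma~\ref{Lemma: a_2-1 is the max of occurrences} (each part other than $a_2$ occurs at most $a_2-1$ times) together with a counting argument --- this is exactly where the hypothesis $n\geq a_2(a_2-1)(3a_2-1)/2$ enters --- to guarantee that a maximizing partition contains at least $a_2-1$ copies of $a_2$; only then can it perform the swaps $(1,a_2)\to(a_3)$ and $(a_i,a_3^{s},a_2^{t})\to(a_3^{s+m+1},a_2^{t-m})$, the latter requiring $t>m$ spare copies of $a_2$, and its key inequality $a_3+m<\left(\frac{a_3}{a_2}\right)^{m}a_3$ is established by a monotonicity analysis of an auxiliary function $\varphi$. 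Your potential-function reformulation replaces all of this machinery: the additive identity strips out the dependence on $n$, Bernoulli's inequality substitutes for the calculus argument (indeed the paper's inequality, after dividing by $a_3$, is exactly $1+\tfrac{m}{a_2+1}<\left(1+\tfrac{1}{a_2}\right)^{m}$, which Bernoulli gives at once via $1+\tfrac{m}{a_2+1}<1+\tfrac{m}{a_2}$), and because no exchange ever needs spare $a_2$-parts, you never need the multiplicity lemma or the strong lower bound on $n$ --- as you note, $n\geq a_2^2-1$ already suffices for the claimed partition to exist, so you prove the lemma under a weaker hypothesis. A further benefit of your route is that the equality analysis yields uniqueness of the maximizing partition explicitly, which the paper obtains only implicitly (through the strictness of its exchanges) yet needs later when applying Theorem~\ref{thm: involved} in Section~\ref{problematic subsection S=(1,r,r+1)}.
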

\begin{proof}
At first, let us note that Lemma \ref{Lemma: a_2>2 a_j>2a_2} ensures that $a_i$ can not appear as a part of $n$ for any $i\geq j+1$. On the other hand, Lemma \ref{Lemma: a_2-1 is the max of occurrences} points out that each number $a_i$ for $i\in\{1,3,4\ldots,j\}$ might occur at most $a_2-1$ times as a part of $(m_1,\ldots,m_k)$. Since we have that
\begin{align*}
&1\cdot (a_2-1)+ a_2(a_2-2)+ (a_2-1)(a_3+\cdots+a_j)\\
<&a_2+a_2(a_2-2)+(a_2-1)(a_2+1+a_2+2+\cdots+2a_2-1)\\
= &a_2(a_2-1)(3a_2-1)/2\leq n,
\end{align*}
it is transparent that $a_2$ needs to occur at least $a_2-1$ times as a part of $(m_1,\ldots,m_k)$. Hence, let us suppose that there is some $a_i$ for $i\in\{1,4,5,\ldots,\}$, which appears as a part of $(m_1,\ldots,m_k)$. If $i=1$, then it suffices to replace $1$ and $a_2$ by $a_3$. On the other hand, if $i\geq 4$, then let us write $a_i=a_3+m$ for some $1\leq m \leq a_2-2$, and notice that one can swap $(a_i,a_3^s,a_2^t)$ by $(a_3^{s+m+1},a_2^{t-m})$ as $t>m$. Thus, it is enough to show that
\begin{align*}
(a_3+m)a_3^sa_2^t<a_3^{s+m+1}a_2^{t-m},
\end{align*} 
or, equivalently,
\begin{align*}
a_3+m<\left(\frac{a_3}{a_2}\right)^ma_3.
\end{align*}
In order to do that, we set 
$$\varphi (x):=\left(\frac{x+1}{x}\right)^m(x+1)-(x+1)-m=(x+1)\left(\left(\frac{x+1}{x}\right)^m-1\right)-m,$$
and observe that 
\begin{align*}
\varphi '(x)=\left(\frac{x+1}{x}\right)^m\left(\frac{x-m}{x}\right)-1.
\end{align*} 
One can also deduce that $\varphi '(x)$ is negative for $x>0$. To do that, we just examine the validity of
\begin{align*}
(x+1)^m(x-m)<x^{m+1},
\end{align*}
which might be rearranged as follows
\begin{align*}
\sum_{l=0}^{m-1}\binom{m}{l}x^{l+1}<m\sum_{l=0}^m\binom{m}{l}x^l.
\end{align*}
Hence, it suffices to check that there are relevant inequalities between corresponding coefficients standing next to $x^q$ and, at least, one of them is strict. That is an elementary exercise to verify. Thus, we get that $\varphi (x)$ is decreasing for $x>0$. Moreover, one can determine that
\begin{align*}
\lim_{x\to\infty}(x+1)\left(\left(\frac{x+1}{x}\right)^m-1\right)=\lim_{x\to\infty}\frac{mx^m+r(x)}{x^m}=m,
\end{align*} 
where $r(x)$ is a polynomial of degree $m-1$. Since we have that $\lim_{x\to\infty}\varphi (x)=0$ and $\varphi (x)$ is decreasing for $x>0$, the required property follows.
\end{proof}

\section*{Acknowledgments}
The first author was supported by the National Science Center grant no.\linebreak 2024/53/N/ST1/01538.


\begin{thebibliography}{AHN24}
\bibitem[AHN24]{AHN24} 
A. Abdesselam, B. Heim, and M. Neuhauser: \emph{On a mod $3$ property of $\ell $-tuples of pairwise commuting permutations\/}.
Ramanujan J., 66 no.\ 1
(2025), article 1, 15 p.

\bibitem[An98]{An98}
G. E. Andrews: The
Theory of
Partitions. Cambridge Univ.\ Press, Cambridge (1998).

\bibitem[B$^{3}$F24]{BBBF24}
W. Bridges, B. Brindle, K. Bringmann, and J. Franke:
\emph{Asymptotic expansion for partitions generated by infinite products.\/}
Math.\ Ann.,\ 390 no.\ 2
(2024), 2593--2632.
\texttt{https://doi.org/10.1007/s00208-024-02807-x}.

\bibitem[BFH24]{BFH24} K. Bringmann, J. Franke, and B. Heim:
\emph{Asymptotics of commuting $\ell$-tuples in
symmetric groups and log-concavity.\/}
Research in Number Theory, 10 no.\ 4 (2024), article 83, 19 p.
\texttt{https://doi.org/10.1007/s40993-024-00562-1}.






\bibitem[DT20]{DT20} G. Debruyne
and G. Tenenbaum:
\emph{The saddle-point method for general partition functions.\/} Indagationes
Math.,\ 31 (2020), 728--738.






\bibitem[GS06]{GS06} B. Granovsky and D. Stark: \emph{Asymptotic enumeration and logical limit laws for
expansive multisets.\/} J. London Math.\ Soc.\ (2), 73 (2006), 252--272.

\bibitem[GS12]{GS12} B. Granovsky and D. Stark: \emph{ 
A Meinardus theorem with multiple singularities.\/} Comm.\
Math.\ Phys., 314 (2012), 329--350.

\bibitem[GS15]{GS15} B. Granovsky and D. Stark:
\emph{Developments in the Khintchine--Meinardus
probabilistic
method for
asymptotic
enumeration.\/}
The Electronic Journal of Combinatorics,
22 no.\ 4
(2015),
article P4.32, 26 p.

\bibitem[GSE08]{GSE08}
B. Granovsky, D. Stark, and M. Erlihson:
\emph{Meinardus' theorem on weighted partitions:
Extensions and a probabilistic proof.\/}  Adv.\ Appl.\ Math.,\ 41 (2008),
307--328.


\bibitem[HN22]{HN22} B. Heim and M. Neuhauser:
\emph{Log-concavity of infinite product generating functions\/}. Res.\ Number
Theory, 8 no.\ 3 (2022), article 53.






\bibitem[Me54]{Me54} G. Meinardus:
\emph{Asymptotische Aussagen über Partitionen.\/}
Math.\ Z., 59 (1954), 388--398.





\end{thebibliography}
\end{document}